\newtheorem{theorem}{Theorem}[section]
\newtheorem{lemma}[theorem]{Lemma}
\newtheorem{corollary}[theorem]{Corollary}
\newtheorem{proposition}[theorem]{Proposition}
\theoremstyle{definition}
\newtheorem{definition}[theorem]{Definition}
\newtheorem{example}[theorem]{Example}
\newtheorem{remark}[theorem]{Remark}
\numberwithin{equation}{section}
\newcommand{\R}{\mathbb R}
\newcommand{\B}{\mathbb B}
\newcommand{\Hi}{\mathbb H}
\newcommand{\X}{\mathbb X}
\newcommand{\Y}{\mathbb Y}
\newcommand{\nullv}{\mathbf{0}}
\newcommand{\Sfer}{\mathbb S}
\newcommand{\Lbo}{\mathcal{L}}
\newcommand{\modconv}[1]{\delta_{#1}}
\newcommand{\lip}[2]{{\rm lip}\left(#1;#2\right)}
\newcommand{\ball}[2]{{\rm B}\left(#1,#2\right)}
\newcommand{\opic}[2]{{\rm o}\left(#1;#2\right)}
\newcommand{\dist}[2]{{\rm dist}\left(#1;#2\right)}
\newcommand{\dom}{{\rm dom}\, }
\newcommand{\cl}{{\rm cl}\, }
\newcommand{\bd}{{\rm bd}\, }
\newcommand{\inte}{{\rm int}\, }
\newcommand{\Fder}{{\rm D}}
\newcommand{\ncone}[2]{{\rm N}(#1,#2)}
\newcommand{\CII}{{\rm C}^{1,1}}
\newcommand{\Image}[2]{\mathcal{I}_{#1,#2}}
\newcommand{\Lagr}[2]{{\rm L}({#1;#2})}
\newcommand{\NlLagr}[2]{\mathscr{L}({#1;#2})}
\newcommand{\val}[1]{{\rm val}_{#1}}
\begin{document}

\title{On the Polyak convexity principle and its application to variational
analysis}

\author{A. Uderzo}

\address{Dipartimento di Matematica e Applicazioni, Universit\`a di Milano-Bicocca,
Via Cozzi, 53 - 20125 Milano, Italy}

%\curraddr{Department of Statistics, University of Milano-Bicocca,
%via Bicocca degli Arcimboldi, 8 - 20126 Milano, Italy}

\email{amos.uderzo@unimib.it}

%\thanks{\textit{Acknowledgements.} }

%    General info
\subjclass{Primary: 52A05; Secondary: 49J52, 90C46, 90C48.}

\date{\today}

%\dedicatory{to None}

\keywords{Uniformly convex Banach space, modulus of convexity,
$\CII$ mapping, metric regularity, constrained optimization, Lagrangian
duality, value function}

\begin{abstract}
According to a result due to B.T. Polyak,  a mapping between
Hilbert spaces, which is $\CII$ around a regular point, carries
a ball centered at that point to a convex set, provided that the
radius of the ball is small enough. 
The present paper considers the extension of such result to mappings
defined on a certain subclass of uniformly convex Banach spaces.
This enables one to extend to such setting a variational principle for
constrained optimization problems, already observed in finite
dimension, that establishes a convex behaviour for proper
localizations of them.
Further variational consequences are explored.
\end{abstract}

\maketitle

%%%%%%%%%%%%%%%%%%%%%%%%%%%%%%%%%%%%%%%%%%%%%%%%%%%%%%%%%%%%%%
%%%%%%%%%%%%%%%%%%%%%%%%%%%%%%%%%%%%%%%%%%%%%%%%%%%%%%%%%%%%%%

\section{Introduction}

The source of several deep results and intriguing problems in nonlinear
analysis can be found, to an attentive view, in the proficuous interplay
between smoothness and convexity. Sometimes, there is some smoothness
hidden in convexity. The generic (in fact, $G_\delta$ dense) G\^ateaux
differentiability of convex continuous functions defined on separable
Banach spaces, which was established by Mazur in 1933, paved the way
to a fruitful research line culminating with the theory of Asplund spaces
(see \cite{FaHaMoPeZi01,Mord06,Phel93}).
Symmetrically, some convexity is hidden in
smoothness. Indeed, smoothness at various levels provides powerful
and widely exploited criteria for detecting convexity of functions.
Another issue arosen within this interplay is how to recognize
convexity of images through smooth mappings. In fact, only a few classes
of mappings between vector spaces are known, of course besides
the linear ones, to guarantee convexity of images of convex subsets
of the domain space.
Yet, such a question seems to be of crucial interest in connection with
optimization and control theory related topics. For instance, the 
famous Lyapunov's convexity theorem on the range of a
nonatomic finite dimensional vector measure found a relevant
application in the formulation of the ``bang-bang" principle, a
fundamental result in control theory, as well as in several areas
of mathematical economics.

Historically, it seems not to be so easy to trace back an origin
for the problem of recognizing convexity of images of sets under
mappings. In this concern, one should not omit to mention the studies
on the convexity of images of spheres through vector quadratic forms,
which were triggered by the Toepliz-Hausdorff theorem (see
\cite{Barv95,Poly98} and references therein).
A significant step towards a theory embracing wide
classes of mappings between abstract spaces was made with the
appearance of a result due to B.T. Polyak (see \cite{Poly01,Poly03}).
He succeeded in proving that $\CII$ mappings between Hilbert
spaces, which are regular at a given point, carry balls centered
at that point to convex sets (with nonempty interior), provided the
radius of the balls are sufficiently small. The opinion maintained
by the author of the present paper is that, despite its importance,
such result (henceforth referred to as the Polyak convexity principle)
has not received so far an adequate attention, deserving
instead  a major popularization, especially among researchers
working in variational analysis and optimization areas. The aim
of the present paper is therefore to contribute to stimulate further
developments on this subject. This is done by showing that the
validity of the Polyak convexity principle is not limited to the
Hilbert space setting, but it can be extended to a certain subclass
of uniformly convex Banach spaces. These form a well-known subclass
of reflexive Banach spaces and are characterized by the rotund
shape of their balls, quantitatively described by their respective
moduli of convexity. A key element which makes possible the
extension to such setting of the aforementioned principle is a
condition on the asymptotic behaviour of the modulus of convexity,
to be combined with the smoothness assumption on the given mapping.
This is because, as already remarked by Polyak, the convexity
principle is not able to preserve convexity of images of general subsets,
but, relying on approximation/perturbation techniques of variational
analysis, it needs a certain ``rotund geometry" on the domain
space, which is able to guarantee a ``stable form" of convexity. In fact,
a possible way of looking at the Polyak principle is as at a solvability
result on smooth equations, with the known term subject to
perturbations restricted to a convex set.

A remarkable consequence of the Polyak convexity principle is that,
to a certain extent, $\CII$ smoothness accompained by regularity
yields a local convex behaviour of mappings. This fact may not be
so striking, if taking into account the nice characterization of $\CII$
functions in Hilbert spaces found by Hiriart-Urruty and Plazanet
in 1989 (see \cite{HirPla89}). According to it, a function $\phi:\Hi
\longrightarrow\R$ defined on a Hilbert space $(\Hi,\|\cdot\|)$ is
$\CII$ iff there is some positive constant $\alpha$ such that
$\phi+\alpha\|\cdot\|^2$ and $-\phi+\alpha\|\cdot\|^2$ are both
convex functions. In particular, $\CII$ functions are known to
be difference of convex functions (notice, again a manifestation of
the interplay between smoothness and convexity).

The benefic effects of the convexity hidden in smoothness can be
evidently appreciated when dealing with optimization problems.
On this theme, Polyak himself observed, on the base of the convexity
principle, that nonlinear problems in mathematical programming
with $\CII$ data behave like convex programs near regular feasible
points. It would be useful that such result, having notable consequences
both from the theoretical and computational point of view, could be
extended far beyond the finite dimensional setting, in which
has been first presented (see \cite{Poly01,Poly03}). An attempt to
proceed in this direction is made in the second part of the present
work. The convexity property of images of convex sets under smooth
mappings has been investigated also in \cite{BoEmKo04}, where
nonlocal sufficient conditions are proposed in a finite dimensional
setting.

The material exposed in the present paper is arranged in four
main sections, included the current one. Section \ref{sec:2} collects
miscellaneous notions from nonlinear analysis and geometrical
theory of Banach spaces. Related technical facts, which are needed in
the subsequent analysis, are established. The main result,
that is an extension of the Polyak convexity principle to an
adequate Banach space setting, is presented and discussed in
Section \ref{sec:3}. Section \ref{sec:4} is reserved to provide
applications of the main result to some topics of nonlinear
optimization in Banach spaces. In particular, a variational principle
on the convex behaviour of proper localizations of constrained
extremum problems with $\CII$ data is derived. Its consequences
on the Lagrangian duality and on problem calmness are
subsequently explored.

%%%%%%%%%%%%%%%%%%%%%%%%%%%%%%%%%%%%%%%%%%%%%%%%%%%%%%%%%%%%%%

\section{Notations and preliminaries}     \label{sec:2}

Throughout the paper, whenever $(\X,\|\cdot\|)$ is a Banach space,
$\ball{x}{r}$ denotes the ball with centre at $x\in\X$ and radius
$r\ge 0$. The same notation is used also for balls in metric spaces.
The null vector of $\X$ is marked by $\nullv$. The
unit ball, i.e. the set $\ball{\nullv}{1}$, is simply denoted by
$\B$, whereas the unit sphere by $\Sfer$. Given $x_1\, x_2\in\X$,
the closed line segment with endpoints $x_1$ and $x_2$ is indicated
by $[x_1,x_2]$. If $S$ is a subset of a Banach space, $\inte S$,
$\bd S$ and $\cl S$ denote the interior, the boundary and the
(topological) closure of $S$, respectively.

\subsection{Uniformly convex Banach spaces and their moduli}

Given a Banach space $(\X,\|\cdot\|)$, some features of the geometry
of $\X$, related to the rotundity of its ball, can be quantitatively
described by means of the function $\modconv{\X}:[0,2]\longrightarrow
\R$, defined by
$$
    \modconv{\X}(\epsilon)=\inf\left\{1-\left\|\frac{x_1+x_2}{2}\right\|:\ 
    x_1,\, x_2\in\B,\ \|x_1-x_2\|\ge\epsilon\right\},
$$
which is called the {\em modulus of convexity} of $(\X,\|\cdot\|)$. It is possible
to prove that the modulus of convexity of a given Banach space admits
the following equivalent representations
\begin{eqnarray*}
     \modconv{\X}(\epsilon)& =& \inf\left\{1-\left\|\frac{x_1+x_2}{2}\right\|:\ 
    x_1,\, x_2\in\B,\ \|x_1-x_2\|=\epsilon\right\}    \\
     & = & \inf\left\{1-\left\|\frac{x_1+x_2}{2}\right\|:\ x_1,\, x_2\in\Sfer,\ \|x_1-x_2\|=
    \epsilon\right\}
\end{eqnarray*}
(see, for instance, \cite{FaHaMoPeZi01}).

\begin{definition}
A Banach space  $(\X,\|\cdot\|)$ is called {\em uniformly convex} if
it is $\modconv{\X}(\epsilon)>0$ for every $\epsilon\in (0,2]$.
\end{definition}

\begin{example}      \label{ex:lpLpWp}
All Hilbert spaces are uniformly convex. Indeed, by a straightforward
application of the parallelogram law it is possible to show that, if
$(\Hi,\|\cdot\|)$ is a Hilbert space, then it results in
$$
    \modconv{\Hi}(\epsilon)=1-\sqrt{1-\frac{\epsilon^2}{4}},\quad\forall
    \epsilon\in [0,2].
$$
The Banach spaces $l^p$, $L^p$, and $W^p_m$ are known to be uniformly
convex if $1<p<\infty$. In particular, if $p\ge 2$ their respective
moduli of convexity can be explicitly calculated. They turn out to be
$$
     \modconv{l^p}(\epsilon)=\modconv{L^p}(\epsilon)=\modconv{W^p_m}
    (\epsilon)=1-\left[1-\left(\frac{\epsilon}{2}\right)^p\right]^{1/p},\quad\forall
    \epsilon\in [0,2].
$$
If $1<p<2$, relying on the asymptotic behaviour of the modulus
of convexity, the following estimate from below is known to hold
$$
    \modconv{l^p}(\epsilon)=\modconv{L^p}(\epsilon)=\modconv{W^p_m}
    (\epsilon)> \frac{p-1}{8}\epsilon^2,\quad\forall\epsilon\in (0,2].
$$
\end{example}

\begin{example}
As a consequence of the James' characterization of weak compactness,
one can deduce that, if a Banach space is uniformly convex, then it
must be reflexive. By consequence, such spaces as $c_0$, $L^1$ and $L^\infty$
fail to be uniformly convex.
\end{example}

For the purposes of the present investigations, a geometrical
property of a special subclass of uniformly convex spaces is
needed. Loosely speaking, such property prescribes
a quadratic estimate from below for the distance
of the middle point of two elements in a ball from the boundary of
that ball. Not surprisingly, a sufficient condition for the validity of
such an estimate can be given in terms of modulus of convexity.

\begin{lemma}     \label{lem:uniconv}
Let $(\X,\|\cdot\|)$ be a uniformly convex Banach space. Suppose
that its modulus of convexity fulfils the condition
\begin{equation}    \label{in:modcocond}
    \modconv{\X}(\epsilon)\ge c\epsilon^2,\quad\forall\epsilon\in
    [0,2],
\end{equation}
for some $c>0$. Then, for every $x_0,\, x_1,\, x_2\in\X$ and
$r>0$, with $x_1,\, x_2\in\ball{x_0}{r}$, it holds
$$
    \ball{\frac{x_1+x_2}{2}}{\frac{c\|x_1-x_2\|^2}{r}}\subseteq
    \ball{x_0}{r}.
$$
\end{lemma}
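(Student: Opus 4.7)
The plan is to reduce the statement to the definition of the modulus of convexity by rescaling into the unit ball, and then close the argument by a single application of the triangle inequality.

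First I would dispose of the trivial case $x_1=x_2$, where the left-hand ball collapses to the midpoint (which equals $x_1\in\ball{x_0}{r}$). Otherwise, set $u_i=(x_i-x_0)/r$ for $i=1,2$, so that $u_1,u_2\in\B$ and $\|u_1-u_2\|=\|x_1-x_2\|/r$. Since $x_1,x_2\in\ball{x_0}{r}$, this ratio lies in $(0,2]$, so the hypothesis \eqref{in:modcocond} applies. Using (either representation of) the modulus of convexity on $u_1,u_2$, I get
$$
   \left\|\frac{u_1+u_2}{2}\right\|\le 1-\modconv{\X}\!\left(\frac{\|x_1-x_2\|}{r}\right)
   \le 1-\frac{c\|x_1-x_2\|^2}{r^2}.
$$

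Unscaling (multiply by $r$ and translate by $x_0$) turns this into the key midpoint estimate
$$
   \left\|\frac{x_1+x_2}{2}-x_0\right\|\le r-\frac{c\|x_1-x_2\|^2}{r}.
$$
Now for any $y\in\ball{(x_1+x_2)/2}{c\|x_1-x_2\|^2/r}$, the triangle inequality immediately yields
$$
   \|y-x_0\|\le\left\|y-\frac{x_1+x_2}{2}\right\|+\left\|\frac{x_1+x_2}{2}-x_0\right\|
   \le\frac{c\|x_1-x_2\|^2}{r}+r-\frac{c\|x_1-x_2\|^2}{r}=r,
$$
which is the desired inclusion.

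There is essentially no hard step here: the quadratic lower bound on $\modconv{\X}$ is precisely the ``budget'' that the center of the smaller ball buys itself away from $\bd\ball{x_0}{r}$, and this budget is just large enough to absorb the radius $c\|x_1-x_2\|^2/r$ via triangle inequality. The only point requiring a moment's care is the verification that the rescaled separation $\|x_1-x_2\|/r$ stays within $[0,2]$ (which is automatic since both $x_i$ lie in $\ball{x_0}{r}$), so that the hypothesis \eqref{in:modcocond} is legitimately invoked.
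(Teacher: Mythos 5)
Your proposal is correct and follows essentially the same route as the paper's proof: reduce to the unit ball (the paper translates to $x_0=\nullv$ and rescales by $r$, you rescale directly via $u_i=(x_i-x_0)/r$), apply the quadratic lower bound on $\modconv{\X}$ to get the midpoint estimate $\left\|\frac{x_1+x_2}{2}-x_0\right\|\le r-\frac{c\|x_1-x_2\|^2}{r}$, and finish with the triangle inequality. No substantive difference.
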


\begin{proof}
Fix $r>0$.
Since the distance induced on $\X$ by $\|\cdot\|$ is invariant
under translations, without loss of generality it is possible to
assume that $x_0=\nullv$. By using one of the possible
representations of $\modconv{\X}$, one has
$$
    \inf_{x_1,x_2\in r\B\atop \|x_1-x_2\|= r\epsilon}\left\{r
       -\left\|\frac{x_1+x_2}{2}\right\| \right\}= r\cdot
     \inf_{u_1,u_2\in \B\atop \|u_1-u_2\|=\epsilon}\left\{1-\left\|
    \frac{u_1+u_2}{2}\right\|\right\}=r\modconv{\X}(\epsilon),
     \quad\forall \epsilon\in [0,2].
$$
By virtue of condition $(\ref{in:modcocond})$ one obtains
$$
     r\ge\sup_{x_1,x_2\in r\B\atop \|x_1-x_2\|=r\epsilon}
       \left\|\frac{x_1+x_2}{2}\right\|+rc\epsilon^2,
     \quad\forall \epsilon\in [0,2].
$$
This amounts to say that
$$
    \left\|\frac{x_1+x_2}{2}\right\|+\frac{c\|x_1-x_2\|^2}{r}\le r,
  \quad \forall x_1,\, x_2\in r\B,\ \|x_1-x_2\|=r\epsilon.
$$
Since the lasty inequality is true for every $\epsilon\in[0,2]$,
it follows
$$
    \left\|\frac{x_1+x_2}{2}\right\|+\frac{c\|x_1-x_2\|^2}{r}\le r,
  \quad \forall x_1,\, x_2\in r\B.
$$
Thus, by applying the triangle inequality, whenever $\hat x\in
\ball{\frac{x_1+x_2}{2}}{\frac{c\|x_1-x_2\|^2}{r}}$ one
obtains
$$
     \|\hat x\|\le\left\|\frac{x_1+x_2}{2}\right\|+
     \left\|\frac{x_1+x_2}{2}-\hat x\right\|\le
  \left\|\frac{x_1+x_2}{2}\right\|+\frac{c\|x_1-x_2\|^2}{r}\le r,
$$
which completes the proof.
\end{proof}

\begin{remark}    \label{rem:convmodHilb}
Notice that, whenever $(\X,\|\cdot\|)$ is in particular a Hilbert
space, in the light of what has been noted in Example \ref {ex:lpLpWp},
condition $(\ref{in:modcocond})$ turns out to be satisfied with
$c=1/8$. Besides, all spaces $l^p$, $L^p$, and $W^p_m$, with $1<p<2$,
admits a modulus of convexity satisfying $(\ref{in:modcocond})$
with $c=\frac{p-1}{8}$.
\end{remark}

For further details on the theory of uniformly convex Banach
spaces and their moduli of convexity the reader is referred to
\cite{Dies85,FaHaMoPeZi01,Milm71}.

%%%%%%%%%%%%%%%%%%%%%%%%%%%%%%%%%%%%%%%%%%%%%%%%%%%%%%%%%%%%%%

\subsection{Some properties of $\CII$ mappings}

Let $(\X,\|\cdot\|)$ and $(\Y,\|\cdot\|)$ be Banach spaces. The
Banach space of all bounded linear operators bewteen $\X$
and $\Y$, equipped with the operator norm, is denoted by $(\Lbo
(\X,\Y),\|\cdot\|_\Lbo)$. The space $\Lbo(\X,\R)$ is simply marked
by $\X^*$, with $\langle\cdot,\cdot\rangle:\X^*\times\X\longrightarrow
\R$ denoting the duality pairing $\X^*$ with $\X$.
The null vector of a dual space is marked by $\nullv^*$.
If $S\subseteq\X$ is a nonempty set, $S^\ominus=\{x^*\in\X^*:\ 
\langle x^*,x\rangle\le 0,\quad\forall x\in S\}$ represents the
negative dual cone of $S$, while $S^\perp=S^\ominus\cap
(-S^\ominus)$ the annihilator of $S$. If $x_0\in S$, $\ncone{x_0}
{S}$ stands for the normal cone of $S$ at $x_0$ in the sense of
convex analysis.
Given a mapping $f:\Omega\longrightarrow\Y$, with $\Omega$ open
subset of $\X$, and given $x_0\in\Omega$, the Fr\'echet derivative
of $f$ at $x_0$ is denoted by $\Fder f(x_0)\in\Lbo(\X,\Y)$.
If $f$ is Fr\'echet differentiable at $x_0$, the remainder
in its first-order expansion is denoted by $\opic{x_0}{\cdot}$,
i.e.
$$
   \opic{x_0}{h}=f(x_0+h)-f(x_0)-\Fder f(x_0)[h],\quad h\in\X,
   \ x_0+h\in\Omega.
$$
If a mapping $f:\Omega\longrightarrow\Y$ is Fr\'echet
differentiable at each point of $\Omega$ and the mapping
$\Fder f:\Omega\longrightarrow\Lbo(\X,\Y)$ is Lipschitz continuous
on $\Omega$, $f$ is said to be $\CII$ on $\Omega$. The space of
all such mappings is indicated by $\CII(\Omega)$. If $f\in\CII
(\Omega)$, the infimum over all values $\kappa>0$ such that
$$
    \|\Fder f(x_1)-\Fder f(x_2)\|_\mathcal{L}\le\kappa\|x_1-x_2\|,
   \quad\forall  x_1,\, x_2\in\Omega,
$$
is called modulus of Lipschitz continuity of $\Fder f$ on $\Omega$
and is indicated by $\lip{\Fder f}{\Omega}$.

The proof of a lemma useful in the sequel involves elements of
integral calculus for mappings between Banach spaces. In this
concern, take into account that, given a compact interval $[a,b]\subseteq\R$ and
$f:[a,b]\longrightarrow\Y$, its integral over $[a,b]$, denoted by
$\int_a^bf(t)\, {\rm d}t$, is to be intended in the sense of Gavurin.
Roughly speaking, this means that
such integral can be defined by partitioning $[a,b]$
into finitely many subintervals and by taking the limit of the
integral sum as the partition mesh width goes to $0$. It has
been shown that every continuous mapping is integrable in this
sense. As a further step, given a mapping $G:\X\longrightarrow
\Lbo(\X,\Y)$ and $x_0, \, h\in\X$, define
$$
    \int_{x_0}^{x_0+h}G(x)\, {\rm d}x=\int_0^1G(x_0+th)[h]\, {\rm d}t.
$$
In such setting, an analogous of the fundamental theorem of classical
integral calculus can be stated as follows.

\begin{theorem}    \label{thm:fundcalint}
Let $f:\X\longrightarrow\Y$ be a mapping between Banach spaces, let
$\Omega$ be an open subset of $\X$, and let $x_0\in\Omega$, $h\in\X$
with $[x_0,x_0+h]\subseteq\Omega$. If $f\in {\rm C}^1(\Omega)$, then
$\int_{x_0}^{x_0+h}\Fder f(x)\, {\rm d}x$ exists and
$$
    \int_{x_0}^{x_0+h}\Fder f(x)\, {\rm d}x=f(x_0+h)-f(x_0).
$$
\end{theorem}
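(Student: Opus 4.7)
My plan is to reduce the vector-valued Banach-space statement to the classical one-variable fundamental theorem of calculus, via a parametrization plus a Hahn--Banach argument.

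First I would rewrite both sides as functions of a single real parameter. Set $g:[0,1]\longrightarrow\Y$ by $g(t)=f(x_0+th)$. Since $[x_0,x_0+h]\subseteq\Omega$ and $f\in {\rm C}^1(\Omega)$, the chain rule gives $g\in {\rm C}^1([0,1],\Y)$ with $g'(t)=\Fder f(x_0+th)[h]$. The mapping $t\mapsto g'(t)$ is then continuous on $[0,1]$ as the composition of the continuous $\Fder f$ with $t\mapsto x_0+th$ and the continuous evaluation at $h$. By the quoted property that every continuous mapping into a Banach space is integrable in Gavurin's sense, the integral $\int_0^1 \Fder f(x_0+th)[h]\,{\rm d}t$ exists, which by the definition recalled just before the statement is exactly the existence of $\int_{x_0}^{x_0+h}\Fder f(x)\,{\rm d}x$. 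It remains to identify its value with $g(1)-g(0)=f(x_0+h)-f(x_0)$.

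For the identification I would use a scalarization argument. Given any $y^*\in\Y^*$, the composition $y^*\circ g:[0,1]\longrightarrow\R$ is of class ${\rm C}^1$, with $(y^*\circ g)'(t)=\langle y^*,g'(t)\rangle$. The classical fundamental theorem of calculus therefore yields
$$
     \langle y^*,g(1)-g(0)\rangle=\int_0^1\langle y^*,g'(t)\rangle\,{\rm d}t.
$$
The Gavurin integral commutes with bounded linear functionals (this follows directly from its construction as a limit of Riemann-type sums, because each $y^*$ is continuous and linear, so it can be passed inside the limit of the partial sums), hence
$$
    \int_0^1\langle y^*,g'(t)\rangle\,{\rm d}t=\left\langle y^*,\int_0^1 g'(t)\,{\rm d}t\right\rangle.
$$
Combining these identities, $\langle y^*,\,g(1)-g(0)-\int_0^1 g'(t)\,{\rm d}t\rangle=0$ for every $y^*\in\Y^*$. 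By the Hahn--Banach theorem, this forces $g(1)-g(0)=\int_0^1 g'(t)\,{\rm d}t$, which is the desired equality.

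The only nontrivial technical point, and where I expect the bulk of the work if one insists on a self-contained argument, is the commutation of the Gavurin integral with continuous linear functionals; alternatively one can bypass Hahn--Banach by estimating $\|f(x_0+h)-f(x_0)-S_n\|$ for a sequence of Riemann sums $S_n$, using the vector-valued mean value inequality applied on each subinterval of a partition together with the uniform continuity of $\Fder f$ on the compact segment $[x_0,x_0+h]$. Either route is routine; the Hahn--Banach reduction is cleaner and keeps the proof short.
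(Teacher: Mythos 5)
Your proof is correct. Note first that the paper itself offers no proof of this statement: it is quoted as a known result of vector-valued integral calculus, with the reader referred to Kantorovich--Akilov, so there is no in-paper argument to compare against. Your scalarization route is a clean and complete way to supply one. The two points that actually need justification are exactly the ones you isolate: (a) continuity of $t\mapsto\Fder f(x_0+th)[h]$ on $[0,1]$, which follows from $f\in{\rm C}^1(\Omega)$ and the continuity of the evaluation map $A\mapsto A[h]$ from $\Lbo(\X,\Y)$ to $\Y$, and which by the paper's quoted integrability criterion gives existence of the Gavurin integral; and (b) the commutation $\langle y^*,\int_0^1 g'(t)\,{\rm d}t\rangle=\int_0^1\langle y^*,g'(t)\rangle\,{\rm d}t$, which is immediate from the definition of the integral as a limit of Riemann-type sums since $y^*$ is linear and continuous. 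With those in hand, the classical one-variable fundamental theorem applied to $y^*\circ g$ (which is ${\rm C}^1$ on $[0,1]$, the one-sided derivatives at the endpoints being covered because the whole segment lies in the open set $\Omega$) plus the fact that $\Y^*$ separates points of $\Y$ finishes the identification. Your alternative route --- estimating $\|f(x_0+h)-f(x_0)-S_n\|$ directly via the mean value inequality and uniform continuity of $\Fder f$ on the compact segment --- is closer in spirit to the textbook treatment and avoids Hahn--Banach, but either is a legitimate and routine completion.
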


\noindent For more details on this topic see \cite{KanAki82} and
references therein. One is now in a position to establish a lemma,
which will be used in the subsequent section.

\begin{lemma}     \label{lem:quadestim}
Let $f:\X\longrightarrow\Y$ be a mapping between Banach spaces, let $\Omega$
be an open subset of $\X$, and let $x_1,\, x_2\in\Omega$, with $[x_1,x_2]\subseteq
\Omega$. If $f\in\CII(\Omega)$ and $\bar x=\frac{x_1+x_2}{2}$, then it holds
\begin{equation*}
  \|\opic{\bar x}{x_1-x_2}\|\le\frac{\lip{\Fder f}{\Omega}}{8}\|x_1-x_2\|^2.
\end{equation*}
Consequently,
\begin{equation}    \label{in:2ndordest}
    \left\|\frac{f(x_1)+f(x_2)}{2}-f\left(\frac{x_1+x_2}{2}\right) \right\|\le
    \frac{\lip{\Fder f}{\Omega}}{16}\|x_1-x_2\|^2.
\end{equation}
\end{lemma}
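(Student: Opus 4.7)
The plan is to pair Theorem \ref{thm:fundcalint} with the Lipschitz bound on $\Fder f$ on $\Omega$. Writing $L=\lip{\Fder f}{\Omega}$, the basic building block is the integral form of the first-order remainder: for any $h\in\X$ with $[\bar x,\bar x+h]\subseteq\Omega$, applying Theorem \ref{thm:fundcalint} to $f$ along that segment and then subtracting the trivial identity $\Fder f(\bar x)[h]=\int_0^1\Fder f(\bar x)[h]\,{\rm d}t$ gives
$$
   \opic{\bar x}{h} = \int_0^1 \bigl[\Fder f(\bar x+th)-\Fder f(\bar x)\bigr][h]\,{\rm d}t.
$$
Taking norms and using the Lipschitz bound $\|\Fder f(\bar x+th)-\Fder f(\bar x)\|_\Lbo\le Lt\|h\|$ under the integral sign yields the workhorse estimate $\|\opic{\bar x}{h}\|\le\tfrac{L}{2}\|h\|^2$.

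For the first displayed inequality I would apply this workhorse with the increment going from $\bar x$ to one of the endpoints, namely $h=x_1-\bar x=(x_1-x_2)/2$. The subsegment $[\bar x,x_1]$ lies in $[x_1,x_2]\subseteq\Omega$, so the integral representation is legitimate, and the identity $\|h\|=\|x_1-x_2\|/2$ converts the workhorse bound into the stated quadratic estimate in $\|x_1-x_2\|^2$.

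For inequality $(\ref{in:2ndordest})$ I would exploit cancellation of the linear part: since $\Fder f(\bar x)[x_1-\bar x]+\Fder f(\bar x)[x_2-\bar x]=\nullv$,
$$
   \frac{f(x_1)+f(x_2)}{2} - f(\bar x) \;=\; \frac{1}{2}\bigl[\opic{\bar x}{x_1-\bar x}+\opic{\bar x}{x_2-\bar x}\bigr],
$$
and, setting $v:=x_1-\bar x$ (so $x_2-\bar x=-v$), the two corresponding integrals fuse into the single expression
$$
   \opic{\bar x}{v}+\opic{\bar x}{-v} \;=\; \int_0^1\bigl[\Fder f(\bar x+tv)-\Fder f(\bar x-tv)\bigr][v]\,{\rm d}t,
$$
whose integrand has norm bounded by $L\|2tv\|\cdot\|v\|=2Lt\|v\|^2$. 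Integrating in $t$ and dividing by $2$ produces the claimed quadratic bound.

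The main technical nuisance I anticipate is simply the bookkeeping of the several factors of $1/2$ that arise from the halving $\|v\|=\|x_1-x_2\|/2$ and from $\int_0^1 t\,{\rm d}t = 1/2$; these must interlock in exactly the right way to yield the stated coefficient, and it matters whether one triangle-inequalities the two remainders separately or fuses them into a single integral (the fused form exploits the extra cancellation near $t=0$ and is what drives the sharpness). A secondary, essentially free, point is to check that every auxiliary point $\bar x\pm tv$ appearing in the integrals lies in $\Omega$, which is immediate from $\bar x\pm tv\in[x_2,x_1]\subseteq\Omega$ for $t\in[0,1]$.
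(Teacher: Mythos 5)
Your treatment of the first inequality is fine and coincides with the paper's own argument: the integral representation of the remainder supplied by Theorem \ref{thm:fundcalint}, the pointwise Lipschitz bound under the integral sign, and the choice $h=x_1-\bar x$ with $\|h\|=\|x_1-x_2\|/2$ give $\tfrac{L}{2}\|h\|^2=\tfrac{L}{8}\|x_1-x_2\|^2$. (You correctly read $\opic{\bar x}{x_1-x_2}$ as the remainder with increment $x_1-\bar x$; with the literal increment $x_1-x_2$ the first claim would already be false.)

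The failure is in the last step for $(\ref{in:2ndordest})$. Your fused identity
$$
\opic{\bar x}{v}+\opic{\bar x}{-v}=\int_0^1\bigl[\Fder f(\bar x+tv)-\Fder f(\bar x-tv)\bigr][v]\,{\rm d}t
$$
is correct, but it buys no ``extra cancellation'': the only available bound on the integrand is $\|\Fder f(\bar x+tv)-\Fder f(\bar x-tv)\|_\Lbo\le 2Lt\|v\|$, which is exactly the sum of the two separate bounds $Lt\|v\|$, so the fused form gives the same constant as triangle-inequalitying the two remainders. Carrying your computation out, $\int_0^1 2Lt\|v\|^2\,{\rm d}t=L\|v\|^2=\tfrac{L}{4}\|x_1-x_2\|^2$, and dividing by $2$ yields $\tfrac{L}{8}\|x_1-x_2\|^2$, not the claimed $\tfrac{L}{16}\|x_1-x_2\|^2$. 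No repair is possible, because the stated constant is wrong: for $\X=\Y=\R$ and $f(x)=\tfrac{L}{2}x^2$, for which $\lip{\Fder f}{\R}=L$, one has $\tfrac{f(x_1)+f(x_2)}{2}-f(\bar x)=\tfrac{L}{8}(x_1-x_2)^2$ exactly, so $\tfrac18$ is sharp and $(\ref{in:2ndordest})$ with $\tfrac{1}{16}$ is false. (The paper's own proof shares the defect: after dividing the two expansions by $2$ it silently drops the factor $\tfrac12$ in front of the remainders and, as written, only delivers $\tfrac{L}{4}$.) Everything downstream, in particular the choice of $\epsilon_0$ in Theorem \ref{thm:Polyakteo}, survives with $\tfrac{1}{16}$ replaced by $\tfrac18$, but your closing assertion that the factors of $\tfrac12$ ``interlock in exactly the right way'' is precisely the step that does not hold.
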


\begin{proof}
The first assertion is a well-known result, whose proof
follows a standard argument based on Theorem \ref{thm:fundcalint}
and is provided here for the sake of completeness.
From the first-order expansion of function $f$ near $\bar x$,
one obtains
\begin{eqnarray*}
    \opic{\bar x}{x_1-x_2} &=& \left\|f(x_1)-f(\bar x)-\Fder f(\bar x)
     \left[\frac{x_1-x_2}{2}\right]\right \|= \left\|\int_0^1\left(\Fder
    f(\bar x+t(x_1-\bar x))-\Fder f(\bar x)\right) 
    \left[\frac{x_1-x_2}{2}\right]{\rm d}t \right\|  \\
    & \le &\int_0^1\|\Fder f(\bar x+t(x_1-\bar x))-\Fder f(\bar x) \|_\mathcal{L}
      \left\|\frac{x_1-x_2}{2}\right\|{\rm d}t \le
    \frac{\lip{\Fder f}{\Omega}}{4}\|x_1-x_2\|^2\int_0^1t{\rm d}t \\
    &= & \frac{\lip{\Fder f}{\Omega}}{8}\|x_1-x_2\|^2.
\end{eqnarray*}
As for the second assertion, by adding up the two below first-order
expansions of mapping $f$ at $\bar x$
$$
    f(x_i)=f(\bar x)+\Fder f(\bar x)[x_i-\bar x]+
    \opic{\bar x}{x_i-\bar x},\quad i=1,\, 2,
$$
and dividing by $2$, one gets
$$
   \frac{f(x_1)+f(x_2)}{2}=f(\bar x)+\frac{1}{2}
   \left\{ \Fder f(\bar x)\left[\frac{x_1-x_2}{2}\right]+
    \Fder f(\bar x)\left[\frac{x_2-x_1}{2}\right]\right\}+
   \opic{\bar x}{\frac{x_1-x_2}{2}}+\opic{\bar x}{\frac{x_2-x_1}{2}}.
$$
Thus, by linearity of the derivative $\Fder f(\bar x)$, one obtains
$$
     \left\|\frac{f(x_1)+f(x_2)}{2}-f(\bar x)\right\|\le
    \left\|\opic{\bar x}{\frac{x_1-x_2}{2}}\right\|+
    \left\|\opic{\bar x}{\frac{x_2-x_1}{2}}\right\|.
$$
The inequality to be proved can be easily derived from the last one
by taking into account the estimate provided in the first part of
the thesis.
\end{proof}

%%%%%%%%%%%%%%%%%%%%%%%%%%%%%%%%%%%%%%%%%%%%%%%%%

\subsection{Metric regularity and linear openness}

A key assumption playing a crucial role in the proof of the main
result is local metric regularity. Recall that a mapping $f:\X\longrightarrow
\Y$ between Banach spaces is said to be {\em metrically regular}
around $(x_0,f(x_0))$, with $x_0\in\X$, if there exist positive
$\delta$, $\zeta$ and $\mu$ such that
$$
   \dist{x}{f^{-1}(y)}\le\mu\|y-f(x)\|,\quad\forall x\in\ball{x_0}{\delta},
   \ \forall y\in\ball{f(x_0)}{\zeta},
$$
where $\dist{x}{S}=\inf_{s\in S}\|s-x\|$ denotes the distance of $x$
from set $S$. For mappings which are strictly differentiable
at $x_0$ (and hence, for mappings $\CII$ in an open neighbourhood
of $x_0$) the following celebrated criterion for metric regularity
holds (see, for instance, Theorem 1.57 in \cite{Mord06})

\begin{theorem}[{\bf Lyusternik-Graves}]
Let $f:\X\longrightarrow\Y$ be a mapping between Banach spaces.
Suppose $f$ to be strictly differentiable at $x_0\in\X$. Then
$f$ is metrically regular around $(x_0,f(x_0))$ iff
$\Fder f(x_0)\in\Lbo(\X,\Y)$ is onto.
\end{theorem}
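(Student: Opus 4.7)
The forward implication ($\Fder f(x_0)$ onto $\Rightarrow$ metric regularity) is the substantial one. Set $A := \Fder f(x_0)$. By Banach's open mapping theorem applied to $A$, there exists a constant $\kappa > 0$ such that every $v \in \Y$ admits some $u \in \X$ with $Au = v$ and $\|u\| \le \kappa\|v\|$. Strict differentiability of $f$ at $x_0$ provides, for any preassigned $\eta > 0$, a radius $\delta > 0$ such that $\|f(x')-f(x)-A(x'-x)\|\le \eta\|x'-x\|$ for all $x,x'\in\ball{x_0}{\delta}$. I would fix $\eta$ with $q := \eta\kappa < 1$, and then shrink $\delta$ and choose $\zeta > 0$ sufficiently small (in terms of $\|A\|_\Lbo,\eta,\kappa,q$) so that the following iteration is well defined.

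Given $x\in\ball{x_0}{\delta/2}$ and $y\in\ball{f(x_0)}{\zeta}$, set $z_0:=x$ and inductively $z_{n+1}:=z_n+u_n$, where $u_n\in\X$ is selected with $Au_n=y-f(z_n)$ and $\|u_n\|\le\kappa\|y-f(z_n)\|$. The central estimate
\[
\|y-f(z_{n+1})\|=\|Au_n-(f(z_n+u_n)-f(z_n))\|\le\eta\|u_n\|\le q\|y-f(z_n)\|
\]
produces geometric decay $\|y-f(z_n)\|\le q^n\|y-f(x)\|$ and consequently $\|u_n\|\le\kappa q^n\|y-f(x)\|$, so $\{z_n\}$ is Cauchy. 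The main technical nuisance is verifying that every iterate stays inside $\ball{x_0}{\delta}$ so that the strict-differentiability bound continues to apply at each step; this is secured by the initial tuning of $\delta$ and $\zeta$. Passing to the limit, $z_n\to x^*$ satisfies $f(x^*)=y$ and
\[
\|x^*-x\|\le\sum_{n\ge 0}\|u_n\|\le\frac{\kappa}{1-q}\|y-f(x)\|,
\]
which is exactly the metric regularity inequality with modulus $\mu:=\kappa/(1-q)$.

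For the converse, assume $f$ is metrically regular around $(x_0,f(x_0))$ with constants $\delta,\zeta,\mu$. For every $v\in\Y$ with $\|v\|\le\zeta$ and every $t\in(0,1]$, the point $f(x_0)+tv$ lies in $\ball{f(x_0)}{\zeta}$, so there exists $x_t\in f^{-1}(f(x_0)+tv)$ with $\|x_t-x_0\|\le\mu t\|v\|$. Setting $u_t:=(x_t-x_0)/t$ and exploiting the Fr\'echet expansion $f(x_t)-f(x_0)=A(x_t-x_0)+\opic{x_0}{x_t-x_0}$, one obtains $Au_t=v-\opic{x_0}{x_t-x_0}/t$. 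Since $\|\opic{x_0}{x_t-x_0}\|/\|x_t-x_0\|\to 0$ as $t\to 0^+$ while $\|x_t-x_0\|/t\le\mu\|v\|$ stays bounded, it follows that $Au_t\to v$, so that $v\in\cl A(\mu\|v\|\B)$. A standard successive-approximation argument in the spirit of the proof of the Banach open mapping theorem then upgrades this density statement to $v\in A(\X)$, proving that $A$ is onto.
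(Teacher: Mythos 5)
The paper offers no proof of this statement: it is quoted as the classical Lyusternik--Graves theorem with a pointer to Theorem 1.57 of \cite{Mord06}, so there is nothing internal to compare against. Your argument is the standard successive-approximation proof and is essentially correct: the forward direction via the Newton-type iteration driven by a bounded-right-inverse selection for $A=\Fder f(x_0)$ (from the open mapping theorem) together with the strict differentiability estimate, and the converse via the limiting argument showing $v\in\cl A(\mu\|v\|\B)$ followed by the open-mapping-type upgrade from density to surjectivity. The one step that genuinely needs more than ``initial tuning'' as stated is the radius bookkeeping: if $x$ ranges over $\ball{x_0}{\delta/2}$ with the \emph{same} $\delta$ on which the strict differentiability bound holds, the iterates need not remain in $\ball{x_0}{\delta}$, since $\frac{\kappa}{1-q}\|y-f(x)\|\le\frac{\kappa}{1-q}\left(\zeta+(\|A\|_\Lbo+\eta)\|x-x_0\|\right)$ contains a term proportional to $\|x-x_0\|$ whose coefficient $\kappa(\|A\|_\Lbo+\eta)/(1-q)$ may exceed $1$, so uniformly shrinking $\delta$ does not help. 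The correct fix is to keep fixed the radius $\delta_\eta$ on which the $\eta$-estimate holds and let $x$ range over a ball $\ball{x_0}{\delta'}$ with $\delta'$ and $\zeta$ chosen small relative to $\delta_\eta$. Two harmless further remarks: the open mapping theorem yields the selection $\|u\|\le\kappa\|v\|$ only for $\kappa$ strictly larger than the optimal Banach constant (an irrelevant loss), and in the converse the infimum defining the distance forces one to take, say, $\|x_t-x_0\|\le 2\mu t\|v\|$, which you implicitly do. With these adjustments the argument is complete.
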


An equivalent reformulation of metric regularity will be also
exploited in the sequel, which refers to a local surjection
property known as {\em openness at a linear rate} around $(x_0,
f(x_0))$. It postulates the existence of positive $\delta$, $\zeta$
and $\sigma$ such that
\begin{equation}      \label{in:loclinopen}
    f(\ball{x}{r})\supseteq\ball{f(x)}{\sigma r}\cap\ball{f(x_0)}{\zeta},
      \quad\forall x\in\ball{x_0}{\delta},\ \forall r\in [0,\delta).
\end{equation}
Actually, metric regularity and openness at a linear rate describe
a Lipschitzian behaviour of mappings, which can be considered
in the more general setting of metric spaces. Given a mapping
$f:X\longrightarrow Y$ between metric spaces, for the purposes
of the present analysis it is convenient to recall also the notion
of openness with respect to a given subset $S\subset X$. Mapping
$f$ is said to be {\em open at a linear rate} on $S$ if there exists
$\sigma>0$ such that for every $x\in S$ and every $r>0$, with
$\ball{x}{r}\subseteq S$, it holds
$$
   f(\ball{x}{r})\supseteq\ball{f(x)}{\sigma r}.
$$
A relevant consequence that openness on a given subset
bears, already if considered in metric spaces, is stated in
the next lemma, whose proof can be found for instance in
\cite{DmMiOs80}.

\begin{lemma}          \label{lem:complopen}
Let $f:X\longrightarrow Y$ be a mapping between metric spaces
and let $S\subseteq X$. Suppose that:

\noindent $(i)$ $X$ is metrically complete, whereas the metric of
$Y$ is invariant under translation;

\noindent $(ii)$ $f\in {\rm C}(X)$ and is open at a linear rate on $S$;

\noindent $(iii)$ $\inte S\ne\varnothing$;

\noindent Then, also $Y$ is metrically complete.
\end{lemma}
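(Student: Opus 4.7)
The strategy is to use the linear openness of $f$ on $S$ to lift a rapidly convergent subsequence of an arbitrary Cauchy sequence in $Y$ to a Cauchy sequence in $X$, then invoke completeness of $X$ and continuity of $f$ to produce a limit in $Y$. Throughout, $d(\cdot,\cdot)$ denotes the metric in the relevant space.

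Let $(y_n)$ be a Cauchy sequence in $Y$. I would first fix, thanks to $(iii)$, a point $x_0\in\inte S$ together with $r_0>0$ such that $\ball{x_0}{r_0}\subseteq S$, and call $\sigma>0$ the rate of linear openness provided by $(ii)$. Extract a subsequence $(y_{n_k})$ with $d(y_{n_k},y_{n_{k+1}})\le 2^{-k}\eta_0$, where $\eta_0>0$ is chosen small enough that $\sigma^{-1}\eta_0<r_0/2$. Using the translation invariance of the metric on $Y$, I replace $(y_{n_k})$ by a translate so that $y_{n_1}=f(x_0)$; since translation is an isometry, this new sequence is still Cauchy and converges iff the original one does.

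The core step is an induction constructing a sequence $(x_k)\subseteq\ball{x_0}{r_0/2}$ such that $f(x_k)=y_{n_k}$ and $d(x_k,x_{k+1})\le\sigma^{-1}2^{-k}\eta_0$. Set $x_1=x_0$. Given $x_k\in\ball{x_0}{r_0/2}$, put $r_k=\sigma^{-1}d(y_{n_k},y_{n_{k+1}})<r_0/2$. Because $d(x_k,x_0)\le r_0/2$ and $r_k<r_0/2$, the ball $\ball{x_k}{r_k}$ is contained in $\ball{x_0}{r_0}\subseteq S$, so the openness assumption in $(ii)$ yields $f(\ball{x_k}{r_k})\supseteq\ball{f(x_k)}{\sigma r_k}\ni y_{n_{k+1}}$, producing $x_{k+1}\in\ball{x_k}{r_k}$ with $f(x_{k+1})=y_{n_{k+1}}$. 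The estimate $d(x_{k+1},x_0)\le\sum_{j=1}^{k}\sigma^{-1}2^{-j}\eta_0<\sigma^{-1}\eta_0<r_0/2$ keeps $x_{k+1}$ in $\ball{x_0}{r_0/2}$, closing the induction.

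Since $\sum_{k}\sigma^{-1}2^{-k}\eta_0<\infty$, the sequence $(x_k)$ is Cauchy in $X$; by $(i)$ it converges to some $x_*\in X$, and by continuity of $f$ (part of $(ii)$) we get $y_{n_k}=f(x_k)\to f(x_*)$. A Cauchy sequence possessing a convergent subsequence is itself convergent, so the translated sequence converges in $Y$; undoing the initial translation yields the desired limit of $(y_n)$, and metric completeness of $Y$ follows. The main technical obstacle is precisely what dictates the choice of $\eta_0$ and the geometric decay of the subsequence: each openness invocation requires its input ball to lie inside $S$, so the cumulative displacement of the $x_k$'s in $X$ has to be controlled a priori to remain within $\ball{x_0}{r_0}$.
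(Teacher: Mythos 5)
The paper does not actually prove this lemma---it defers to \cite{DmMiOs80}---so there is no in-paper argument to compare against; your proof is the standard Cauchy-sequence lifting argument used in that reference, and it is correct. Two cosmetic points only: when $d(y_{n_k},y_{n_{k+1}})=0$ you should take $x_{k+1}=x_k$ rather than invoke openness with radius $r_k=0$ (the hypothesis only covers $r>0$), and your translation step tacitly uses that translations act transitively on $Y$ and map $Y$ into itself, which is the intended (if unstated) reading of hypothesis $(i)$.
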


The above lemma is employed next to prove the following property
useful in the sequel.

\begin{lemma}       \label{lem:closeimage}
Let $f:\X\longrightarrow\Y$ be a mapping between Banach spaces,
let $\Omega\subseteq\X$ be an open set and let $x_0\in\Omega$.
Suppose that $f\in {\rm C}(\Omega)$ and it is open at a linear rate around
$(x_0,f(x_0))$. Then, there exists $r_0>0$ such that, for every 
$r\in (0,r_0]$ the set $f(\ball{x_0}{r})$ is closed.
\end{lemma}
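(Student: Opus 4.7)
The idea is to apply Lemma \ref{lem:complopen} directly: we restrict $f$ to a small closed ball around $x_0$, which is a complete metric space, and verify that on a suitable sub-ball with non-empty interior the restriction is open at a linear rate. The lemma will then deliver metric completeness---and hence closedness in $\Y$---of the image $f(\ball{x_0}{r})$ viewed as a metric subspace of the translation-invariant codomain $\Y$.

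From the hypothesis of openness at a linear rate around $(x_0,f(x_0))$, fix positive $\delta,\zeta,\sigma$ for which $(\ref{in:loclinopen})$ holds; and by continuity of $f$ at $x_0$, fix $\rho>0$ such that $f(\ball{x_0}{\rho})\subseteq\ball{f(x_0)}{\zeta/2}$. Choose
$$
   r_0=\min\{\rho,\, \delta/2,\, \zeta/(4\sigma)\},
$$
reducing further if necessary to guarantee $\ball{x_0}{r_0}\subseteq\Omega$. Fix $r\in(0,r_0]$ and set $X:=\ball{x_0}{r}$, which, as a closed subset of the Banach space $\X$, is complete, while $f|_X$ is continuous. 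Let $S:=\ball{x_0}{r/2}$, whose interior in $X$ is non-empty (it contains $\{x:\|x-x_0\|<r/2\}$).

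The crux of the argument is showing that $f|_X$ is open at a linear rate on $S$, in the sense required by Lemma \ref{lem:complopen}. For any $x\in S$ and any $r'>0$ with $\ball{x}{r'}\subseteq S$, one has $\|x-x_0\|\le r/2<\delta$ and $r'\le r/2<\delta$, so $(\ref{in:loclinopen})$ yields
$$
   f(\ball{x}{r'})\supseteq\ball{f(x)}{\sigma r'}\cap\ball{f(x_0)}{\zeta}.
$$
For every $y\in\ball{f(x)}{\sigma r'}$, the triangle inequality together with the bounds $\sigma r'\le\sigma r_0/2\le\zeta/8$ and $\|f(x)-f(x_0)\|\le\zeta/2$ gives
$$
   \|y-f(x_0)\|\le\sigma r'+\|f(x)-f(x_0)\|\le \zeta/8+\zeta/2<\zeta,
$$
so $\ball{f(x)}{\sigma r'}\subseteq\ball{f(x_0)}{\zeta}$ and the intersection above is redundant. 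Consequently $f(\ball{x}{r'})\supseteq\ball{f(x)}{\sigma r'}$, which is exactly the openness-on-$S$ property.

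All hypotheses of Lemma \ref{lem:complopen} are now in place, and its conclusion yields the metric completeness of $f(\ball{x_0}{r})$, regarded as a metric subspace of the Banach space $\Y$; this is equivalent to closedness of $f(\ball{x_0}{r})$ in $\Y$. The main (essentially the only) obstacle is the calibration of $r_0$ carried out above, which renders the localization ball $\ball{f(x_0)}{\zeta}$ in $(\ref{in:loclinopen})$ vacuous and thereby upgrades the openness "around a point" given by hypothesis to the openness "on a set" required by Lemma \ref{lem:complopen}; once this is done, the application of the lemma is routine.
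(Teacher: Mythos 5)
Your proof is correct and follows essentially the same route as the paper's: shrink the radius so that continuity of $f$ at $x_0$ makes the localization ball $\ball{f(x_0)}{\zeta}$ in $(\ref{in:loclinopen})$ redundant, thereby upgrading openness around $(x_0,f(x_0))$ to openness on the half-radius ball $S=\ball{x_0}{r/2}$, and then apply Lemma \ref{lem:complopen} with $X=\ball{x_0}{r}$ to obtain completeness, hence closedness, of the image. Only the explicit constants in the calibration of $r_0$ differ from the paper's choice.
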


\begin{proof}
Since $\Omega$ is open, it is possible to take $\tilde r>0$ in such a
way that $\ball{x_0}{\tilde r}\subseteq\Omega$. Notice that, as $f$
is continuous at $x_0$, inclusion $(\ref{in:loclinopen})$, valid owing
to the linear openness of $f$, entails the existence of $\tilde\delta>0$
such that
\begin{equation}      \label{in:linopvar}
     f(\ball{x}{r})\supseteq\ball{f(x)}{\sigma r}, \quad\forall
   x\in\ball{x_0}{\tilde\delta},\ \forall r\in [0,\tilde\delta).
\end{equation}
Indeed, corresponding to $\zeta$ one can find $\delta_\zeta>0$
such that
$$
   f(\ball{x_0}{\delta_\zeta})\subseteq\ball{f(x_0)}{\zeta/2}.
$$
Thus, by taking $\tilde\delta>0$ in such a way that
$$
    \tilde\delta<\min\left\{\tilde r,\, \delta_\zeta,\, \frac{\zeta}
   {2\sigma},\, \delta\right\},
$$
one obtains that, whenever $x\in\ball{x_0}{\tilde\delta}$, it is
$f(x)\in\ball{x_0}{\zeta/2}$. It follows that, if $t\in (0,\tilde\delta]$,
and hence $\sigma t<\zeta/2$, one has
$$
    \ball{f(x)}{\sigma t}\subseteq\ball{f(x_0)}{\zeta}.
$$
Being $\tilde\delta<\delta$, the last inclusion reduces
$(\ref{in:loclinopen})$ to $(\ref{in:linopvar})$. Set $r_0=\tilde
\delta$ and fix an arbitrary $r\in (0,r_0]$. Put in that form,
openness at a linear rate around $(x_0,f(x_0))$ implies openness
on $\ball{x_0}{r/2}$, because only balls satisfying $\ball{x}{t}
\subseteq\ball{x_0}{r/2}$ must be considered. One is then in a
position to apply Lemma \ref{lem:complopen},
with $X=\ball{x_0}{r}$, $S=\ball{x_0}
{r/2}$ and $Y=f(\ball{x_0}{r})$. It follows that $f(\ball{x_0}{r})$
is metrically complete and, as such, it must be a closed subset of
$\Y$. This completes the proof.
\end{proof}

%%%%%%%%%%%%%%%%%%%%%%%%%%%%%%%%%%%%%%%%%%%%%%%%%%%%%%%%%%%%%%

\hskip1cm

\section{The Polyak convexity principle in Banach spaces}    \label{sec:3}

Before entering the main result of the paper, to make easier the
presentation of its proof, a fact concerning convexity of sets is
explicitly stated, whose proof can be obtained without difficulty.

\begin{lemma}      \label{lem:convmidpoint}
Let $S\subseteq\Y$ be a closed subset of a Banach space. $S$ is
convex iff $\frac{1}{2}(y_1+y_2)\in S$, whenever $y_1,\ y_2\in S$.
\end{lemma}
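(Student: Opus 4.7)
The plan is to treat the two implications separately, with the forward one being immediate from the definition of convexity and the reverse one relying on an iteration argument combined with the closedness of $S$.

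For the forward direction, I simply observe that if $S$ is convex and $y_1,y_2\in S$, then every convex combination $\lambda y_1+(1-\lambda)y_2$ with $\lambda\in[0,1]$ belongs to $S$; specializing to $\lambda=1/2$ yields the midpoint condition.

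For the reverse direction, assume $S$ is closed and $\frac{1}{2}(y_1+y_2)\in S$ whenever $y_1,y_2\in S$. Fix arbitrary $y_1,y_2\in S$ and $\lambda\in[0,1]$; the goal is to show that $y_\lambda:=\lambda y_1+(1-\lambda)y_2\in S$. The key step is to prove by induction on $n$ that for every $n\in\N$ and every $k\in\{0,1,\dots,2^n\}$, the dyadic convex combination
$$
   z_{n,k}=\frac{k}{2^n}y_1+\left(1-\frac{k}{2^n}\right)y_2
$$
belongs to $S$. The base case $n=0$ is trivial (giving $y_1$ or $y_2$), and the inductive step uses the fact that $z_{n+1,2j}=z_{n,j}\in S$ and $z_{n+1,2j+1}=\frac{1}{2}(z_{n,j}+z_{n,j+1})\in S$ by the midpoint hypothesis.

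To conclude, I invoke the density of the dyadic rationals in $[0,1]$: choose a sequence $\lambda_n=k_n/2^n$ of dyadic rationals with $\lambda_n\to\lambda$. Then $z_{n,k_n}\in S$, and by continuity of the vector space operations $z_{n,k_n}\to y_\lambda$. Closedness of $S$ gives $y_\lambda\in S$, which establishes convexity. I do not anticipate any serious obstacle here; the only subtlety, and the reason the closedness hypothesis appears in the statement, is that the limiting step above genuinely requires $S$ to be closed — midpoint convexity alone is not enough to guarantee convexity without some regularity assumption on $S$.
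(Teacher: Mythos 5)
Your proof is correct and complete: the dyadic-rational induction followed by a closure/limit argument is exactly the standard argument the paper has in mind when it states that the proof ``can be obtained without difficulty'' and omits it. Nothing to add; your closing remark correctly identifies why closedness is essential (the dyadic points of a segment already form a midpoint-closed, non-convex set).
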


\begin{theorem}    \label{thm:Polyakteo}
Let $f:\X\longrightarrow\Y$ be a mapping between Banach spaces, let $\Omega$
be an open subset of $\X$, let $x_0\in\Omega$, and $r>0$ such that $\ball{x_0}{r}
\subseteq\Omega$. Suppose that:

\noindent $(i)$ $(\X,\|\cdot\|)$ is uniformly convex and its
convexity modulus fulfils condition $(\ref{in:modcocond})$;

\noindent $(ii)$ $f\in\CII(\Omega)$ and $\Fder f(x_0)\in\Lbo(\X,\Y)$
is onto.

\noindent Then, there exists $\epsilon_0\in (0,r)$ such that
$f(\ball{x_0}{\epsilon})$ is convex, for every $\epsilon\in
[0,\epsilon_0]$.
\end{theorem}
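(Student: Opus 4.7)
The plan is to invoke Lemma \ref{lem:convmidpoint}: for $\epsilon$ sufficiently small, I will verify that $f(\ball{x_0}{\epsilon})$ is both closed and stable under midpoints. Since $f\in\CII(\Omega)$ is in particular strictly differentiable at $x_0$ with surjective derivative, the Lyusternik--Graves theorem yields metric regularity, equivalently openness at a linear rate with some rate $\sigma>0$, around $(x_0,f(x_0))$. Continuity of $f$ on $\Omega$ is automatic, so Lemma \ref{lem:closeimage} produces $r_0>0$ such that $f(\ball{x_0}{\epsilon})$ is closed for every $\epsilon\in(0,r_0]$. This settles the closedness part uniformly in $\epsilon$.

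For midpoint stability, fix $y_1,y_2\in f(\ball{x_0}{\epsilon})$ and choose $x_i\in\ball{x_0}{\epsilon}$ with $f(x_i)=y_i$; the case $x_1=x_2$ being trivial, assume $x_1\ne x_2$. Put $\bar x=(x_1+x_2)/2$, $\bar y=(y_1+y_2)/2$, and let $\kappa=\lip{\Fder f}{\Omega}$. Inequality (\ref{in:2ndordest}) in Lemma \ref{lem:quadestim} gives
\[
    \|\bar y-f(\bar x)\|\le\frac{\kappa}{16}\|x_1-x_2\|^2.
\]
By linear openness, as soon as $\epsilon$ is small enough that $\bar x$ lies in the neighborhood of $x_0$ on which (\ref{in:loclinopen}) applies and the radius $\rho:=\kappa\|x_1-x_2\|^2/(16\sigma)$ is a legitimate openness radius (which, since $\|x_1-x_2\|\le 2\epsilon$, follows from a uniform bound on $\epsilon$), one has $f(\ball{\bar x}{\rho})\supseteq\ball{f(\bar x)}{\sigma\rho}\ni\bar y$; hence there exists $\hat x\in\ball{\bar x}{\rho}$ with $f(\hat x)=\bar y$.

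It remains to guarantee $\hat x\in\ball{x_0}{\epsilon}$, and this is where hypothesis $(i)$ plays its indispensable role. Lemma \ref{lem:uniconv} applied to $x_1,x_2\in\ball{x_0}{\epsilon}$ gives
\[
    \ball{\bar x}{c\|x_1-x_2\|^2/\epsilon}\subseteq\ball{x_0}{\epsilon},
\]
so it suffices to impose $\rho\le c\|x_1-x_2\|^2/\epsilon$, i.e. $\epsilon\le 16c\sigma/\kappa$. Taking $\epsilon_0$ as the minimum of $r_0$, of $16c\sigma/\kappa$, and of the threshold needed for the relevant instance of linear openness to be available, yields $\hat x\in\ball{x_0}{\epsilon}$ for every $\epsilon\in(0,\epsilon_0]$, whence $\bar y=f(\hat x)\in f(\ball{x_0}{\epsilon})$, and Lemma \ref{lem:convmidpoint} completes the proof.

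The delicate point is the balancing of three quadratic quantities of the same order in $\|x_1-x_2\|$: the $\CII$ midpoint defect coming from Lemma \ref{lem:quadestim}, the openness rate $1/\sigma$ which translates a target neighborhood of radius $\rho$ into a preimage neighborhood of the same order, and the uniform-convexity ``inner-ball'' estimate of Lemma \ref{lem:uniconv}. The ratio between the smoothness defect and the inner-ball gain scales like $\epsilon$, which explains both why only shrinking $\epsilon$ yields the conclusion and why the quadratic bound $\modconv{\X}(\epsilon)\ge c\epsilon^2$ is the natural rotundity hypothesis to compensate the second-order smoothness defect.
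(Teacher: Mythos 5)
Your proof is correct and follows essentially the same route as the paper's: Lyusternik--Graves for regularity, Lemma \ref{lem:closeimage} for closedness, the quadratic midpoint defect of Lemma \ref{lem:quadestim}, the inner-ball estimate of Lemma \ref{lem:uniconv}, and Lemma \ref{lem:convmidpoint} to conclude. The only (immaterial) difference is that you produce the preimage $\hat x$ of $\bar y$ via the linear-openness inclusion $(\ref{in:loclinopen})$ with rate $\sigma$, whereas the paper uses the equivalent metric-regularity distance estimate with constant $\mu$; the resulting thresholds on $\epsilon$ agree up to the harmless substitution $\sigma\sim 1/\mu$.
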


\begin{proof}
Under hypothesis $(ii)$ it is possible to invoke the Lyusternik-Graves
theorem. According to it, mapping $f$ is locally metrically regular
around $(x_0,f(x_0))$. This means that there exist $\mu>0$,
$\zeta>0$, and $\delta_\mu>0$ such that
\begin{equation}     \label{in:metricreg}
    \dist{x}{f^{-1}(y)}\le\mu\|y-f(x)\|,\quad\forall x\in\ball{x_0}{\delta_\mu},
   \ \forall y\in\ball{f(x_0)}{\zeta}.
\end{equation}
By continuity of $f$ at $x_0$, corresponding to $\zeta$ there
exists $\delta_\zeta>0$ such that
$$
    f(x)\in\ball{f(x_0)}{\zeta},\quad\forall x\in\ball{x_0}{\delta_\zeta}.
$$
Since $f$ is continuous on $\Omega$ and open at a linear rate around
$(x_0,f(x_0))$, by virtue of Lemma \ref{lem:closeimage} there exists
$r_0>0$ such that $f(\ball{x_0}{t})$ is closed for every $t\in (0,r_0]$.
Now, take $\epsilon_0$ in such a way that
$$
   0<\epsilon_0<\min\left\{r,r_0,\delta_\mu,\delta_\zeta,
  \frac{8c}{\mu(\lip{\Fder f}{\Omega}+1)}\right\},
$$
where $c$ is as in $(\ref{in:modcocond})$, and fix an
arbitrary $\epsilon\in (0,\epsilon_0]$, the case $\epsilon=0$ being
trivial. In the light of Lemma \ref{lem:convmidpoint}, in order to
show that $f(\ball{x_0}{\epsilon})$ is convex, it suffices to prove
that, taken any pair $y_1,\, y_2\in f(\ball{x_0}{\epsilon})$ and
set
$$
    \bar y=\frac{y_1+y_2}{2},
$$
then also $\bar y$ happens to belong to $f(\ball{x_0}{\epsilon})$.
To this aim, corresponding to $y_1,\, y_2$, take $x_1,\, x_2\in
\ball{x_0}{\epsilon}$ such that $f(x_1)=y_1$ and $f(x_2)=y_2$
and define
$$
    \bar x=\frac{x_1+x_2}{2}.
$$
Since it is $\epsilon<\delta_\zeta$, the continuity of $f$ at $x_0$
implies
$$
   y_1,\, y_2\in\ball{f(x_0)}{\zeta},
$$
and hence $\bar y\in\ball{f(x_0)}{\zeta}$. Thus, since it is also
$\epsilon<\delta_\mu$,  then, being $(\bar x,\bar y)\in\ball{x_0}{\delta_\mu}
\times\ball{f(x_0)}{\zeta}$, by recalling inequality (\ref{in:metricreg})
one obtains
\begin{equation}    \label{in:metricregbar}
    \dist{\bar x}{f^{-1}(\bar y)}\le\mu\|\bar y-f(\bar x)\|.
\end{equation}
If it is $\bar y=f(\bar x)$, one achieves immediately what
was to be proved. So, suppose that $\|\bar y-f(\bar x)\|>0$. From
inequality  (\ref{in:metricregbar}) it follows that, corresponding
to $2\mu$, there exists $\hat x\in f^{-1}(\bar y)$, such that
$$
    \|\hat x-\bar x\|<2\mu\|\bar y-f(\bar x)\|.
$$
In force of hypothesis $(i)$ it is possible to apply the estimate
$(\ref{in:2ndordest})$ in Lemma \ref{lem:quadestim},
according to which one finds
$$
    \|\hat x-\bar x\|\le 2\mu \frac{\lip{\Fder f}{\Omega}}{16}\|x_1-x_2\|^2.
$$
Therefore, since by the above positions it is
$$
    \frac{\mu(\lip{\Fder f}{\Omega}+1)}{8c}<\frac{1}{\epsilon},
$$
it results in
$$
    \hat x\in\ball{\bar x}{\frac{c\|x_1-x_2\|^2}{\epsilon}}.
$$
According to Lemma \ref{lem:uniconv}, this fact is known to imply
that $\hat x\in\ball{x_0}{\epsilon}$, by virtue of the condition
$(\ref{in:modcocond})$ assumed on the convexity modulus of $(\X,\|\cdot\|)$.
Thus, $\bar y$ has been proved to belong to $f(\ball{x_0}{\epsilon})$,
so the proof is complete.
\end{proof}

\begin{remark}     \label{rem:Polyakteo}
(i) Since, as noticed in Remark \ref{rem:convmodHilb}, every Hilbert
space is an uniformly convex Banach space, whose modulus of convexity
fulfils condition $(\ref{in:modcocond})$, Theorem \ref{thm:Polyakteo}
is actually an extension of the Polyak convexity principle. Notice
that no assumption on the geometry of the range space $\Y$ has
been made.

(ii) The regularity condition requiring $\Fder f(x_0)$ to be onto can
not be dropped out, even in the case of very simple mappings acting
in finite-dimensional spaces. Consider, indeed, $f:\R^2\longrightarrow
\R^2$ defined by
$$
   f(x_1,x_2)=((x_1+x_2),(x_1+x_2)^2),
$$
and $x_0=(0,0)=\nullv$, $\R^2$ being equipped with its usual Hilbert
space structure. Mapping $f\in {\rm C}^2(\R^2)$, so it belongs
to $\CII(\inte\ball{\nullv}{r})$, for a proper $r>0$. Its Jacobian
matrix has rank $1$ at $\nullv$, so $\Fder f(\nullv)$ can not cover
$\R^2$. It is readily seen that, for every $\epsilon>0$, it results
in
$$
    f(\ball{\nullv}{\epsilon})=\{(y_1,y_2)\in\R^2: y_2=y_1^2,\, y_1\in
    [-\sqrt{2}\epsilon,\sqrt{2}\epsilon]\},
$$
which is not a convex subset of $\R^2$. Since, as a mapping
defined on the Hilbert space $\R^2$, $f$ satisfies all hypotheses
of Theorem \ref{thm:Polyakteo}, this example shows also that a
mapping carrying small balls to convex sets may happen to carry
convex subsets of such balls to nonconvex sets.

(iii) The next example shows that one can not hope to extend Theorem
\ref{thm:Polyakteo} out of the class of uniformly convex Banach spaces.
Suppose $\R^2$ to be equipped with the norm $\|x\|_\infty=\max
\{|x_1|,|x_2|\}$, which makes $\R^2$ not uniformly convex. Consider
the mapping $f:\R^2\longrightarrow\R^2$ defined by
$$
   f(x_1,x_2)=(x_1,x_1^2+x_2),
$$
and $x_0=(0,0)=\nullv$. Since $f\in {\rm C}^2(\R^2)$, it is also
$\CII(\inte\ball{\nullv}{r})$, for a proper $r>0$.  Moreover, being
$\Fder f(x)$ represented by the matrix
$$
   \left( \begin{array}{cc}
                    1 & 0 \\
                     2x_1 & 1 
         \end{array} \right),
$$
the linear mapping $\Fder f(x)$ is onto for every $x\in\R^2$.
Nonetheless, since now $\ball{\nullv}{\epsilon}=[-\epsilon,\epsilon]
\times[-\epsilon,\epsilon]$, it results in
$$
   f(\ball{\nullv}{\epsilon})=\bigcup_{t\in [-\epsilon,\epsilon]}
   \{(y_1,y_2)\in\R^2: y_2=y_1^2+t,\, y_1\in  [-\epsilon,\epsilon]\},
$$
which can be convex only if $\epsilon=0$.

(iv) The following complement of Theorem \ref{thm:Polyakteo},
already remarked in \cite{Poly01}, is worth being mentioned.
From hypothesis (ii) one has that $f(\inte \ball{x_0}{\epsilon})
\subseteq\inte f(\ball{x_0}{\epsilon})\ne\varnothing$, for every
$\epsilon\in(0,\epsilon_0]$. Therefore, it holds
$$
    f^{-1}(\bd f(\ball{x_0}{\epsilon}))\subseteq
    \bd\ball{x_0}{\epsilon}.
$$
\end{remark}

An interesting question related to Theorem \ref{thm:Polyakteo}
is whether it can be extended to some classes of nonsmooth mappings.
In consideration of the importance of nonsmooth analysis in
optimization, this further development would be remarkable and
widely motivated.
Reduced to its basic elements, as a  matter of fact, the proof
of Theorem \ref{thm:Polyakteo} consists in a proper combination of
distance estimates relying on a rotund geometry and metric regularity.
The latter has been well understood also for nonsmooth mappings and
adequately characterized in terms of generalized derivatives 
(see, for a thorough account on the subject, \cite{Mord06}).
Nonetheless, within the current approach, a developement in this direction
seems to be hardly possible. In this regard, a counterexample has
been devise by A.D. Ioffe showing that already ${\rm C}^1$ mappings
may happen to do not satisfy the thesis of the Polyak convexity
principle. Apart from the need of Lipschitz continuity of the derivative
mapping, another reason of difficulty is the role crucially played
by linearity in the estimates provided by Lemma \ref{lem:quadestim}
as well as in preserving convexity of sets. In both such circumstances
a successful replacement of linear mappings with merely positively
homogeneous first order approximations seems to be hardly
practicable.

%%%%%%%%%%%%%%%%%%%%%%%%%%%%%%%%%%%%%%%%%%%%%%%%%%%%%%%%%%

\section{Applications to optimization}      \label{sec:4}

\subsection{A variational principle on the convex behaviour
of extremum problems}

Carrying on a reasearch line proposed in \cite{Poly01,Poly03},
this subsection is concerned with the study of local aspects
of the theory of constrained optimization problems of the
following form
$$
    \min_{x\in\X} \varphi(x)\quad\hbox{subject to}\quad g(x)\in C,
    \leqno (\mathcal{P})
$$
where the cost functional $\varphi:\X\longrightarrow\R\cup\{\pm
\infty\}$, the constraining mapping $g:\X\longrightarrow\Y$ and set
$C\subseteq\Y$ are given problem data. The feasible region of
$(\mathcal{P})$ is denoted here by $R=\{x\in\X:\ g(x)\in C\}=g^{-1}(C)$.
Set $\mathcal{Q}=(-\infty,0)\times C\subseteq\R\times\Y$ and fix
$x_0\in\X$. Following a wide-spread approach in optimization
(see, among others, \cite{DmMiOs80}),
the analysis of various features of
$(\mathcal{P})$ can be performed by associating with
that problem and with an element $x_0$ a mapping $\Image{\mathcal{P}}
{x_0}:\X\longrightarrow\R\times\Y$, defined as
\begin{equation}     \label{eq:imagemap}
    \Image{\mathcal{P}}{x_0}(x)=(\varphi(x)-\varphi(x_0),g(x)).
\end{equation}
Such mapping allows one to characterize the optimality of $x_0$,
as stated in the below remark.

\begin{remark}
An element $x_0\in R$ is a local solution to $(\mathcal{P})$ iff
there exists $r>0$ such that
$$
   \Image{\mathcal{P}}{x_0}(\ball{x_0}{r})\cap\mathcal{Q}
   =\varnothing. 
$$
Letting $\ball{x_0}{+\infty}=\X$, the above disjunction with $r=+\infty$
obviously characterizes global optimality of $x_0$.
\end{remark}

Such characterization is applied  in the next result to establish
a variational principle involving the classical Lagrangian function
${\rm L}:\Y^*\times\X\longrightarrow\R\cup\{\pm\infty\}$
$$
   \Lagr{y^*}{x}=\varphi(x)+\langle y^*,g(x)\rangle.
$$
Given $\epsilon>0$ and $x_0\in\R$, by a {\em $\epsilon$-localization}
of problem $(\mathcal{P})$ around $x_0$, the following extremum
problem is meant
$$
    \min_{x\in\ball{x_0}{\epsilon}}\varphi(x)\quad\hbox{subject to}
    \quad g(x)\in C.   \leqno (\mathcal{P}_{x_0,\epsilon})
$$
Notice that $(\mathcal{P}_{x_0,\epsilon})$ has the same objective
function as $(\mathcal{P})$, but its feasible region results from
$\ball{x_0}{\epsilon}\cap R$.

An element $x_0\in R$ is said to be regular for $(\mathcal{P})$
if $\varphi,\, g\in\CII(\Omega)$, where $\Omega$ is an open set
containing $x_0$, and mapping $\Image{\mathcal{P}}{x_0}$ is regular
at $x_0$ in the classical sense, i.e. mapping $\Fder (\varphi,g)
(x_0)$ is onto.

The variational principle, which is going to be presented next,
states that, in an adequate setting, around each regular point for
$(\mathcal{P})$ and
corresponding to each $\epsilon$ small enough, there exists
a $\epsilon$-localization of $(\mathcal{P})$ admitting a solution,
which further minimizes $\Lagr{y^*}{\cdot}$, for a proper $y^*\in
\Y^*$.

\begin{theorem}     \label{pro:Lagroptcond}
With reference to problem $(\mathcal{P})$, let $\Omega\subseteq\X$
be an open set and let $x_0\in R\cap\Omega$. Suppose that

\noindent $(i)$ $(\X,\|\cdot\|)$ is uniformly convex and its
convexity modulus fulfils condition $(\ref{in:modcocond})$;

\noindent $(ii)$ $(\Y,\|\cdot\|)$ is a reflexive Banach space; 

\noindent $(iii)$ $\varphi,\, g\in\CII(\Omega)$ and $\Fder(\varphi,g)
(x_0)\in\Lbo(\X,\R\times\Y)$ is onto.

\noindent Then, there exists a positive $\epsilon_0$ such that for every
$\epsilon\in (0,\epsilon_0]$ there are $x_\epsilon\in\bd\ball{x_0}
{\epsilon}$ and $\lambda_\epsilon\in\Y^*$ with the properties:
\begin{equation}     \label{sol:optim}
   \hbox{$x_\epsilon$ solves problem $(\mathcal{P}_{x_0,\epsilon})$,}
\end{equation}
\begin{equation}      \label{in:nconemultip}
    \lambda_\epsilon\in\ncone{g(x_\epsilon)}{C},
\end{equation}
and
\begin{equation}     \label{in:minconLagr}
    \Lagr{\lambda_\epsilon}{x_\epsilon}\le
   \Lagr{\lambda_\epsilon}{x},\quad\forall
   x\in\ball{x_0}{\epsilon}.
\end{equation}
\end{theorem}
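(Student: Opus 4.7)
The plan is to apply the extended Polyak convexity principle (Theorem \ref{thm:Polyakteo}) to the combined mapping $f:=(\varphi,g):\Omega\longrightarrow\R\times\Y$, whose Fr\'echet derivative at $x_0$ is surjective by (iii) and which is $\CII$ on $\Omega$ by the same hypothesis; note that no geometric requirement falls on the target $\R\times\Y$, while (i) supplies the one needed on $\X$. Together with Lemma \ref{lem:closeimage}, Theorem \ref{thm:Polyakteo} yields $\epsilon_0>0$ such that for every $\epsilon\in(0,\epsilon_0]$ the set $K_\epsilon:=\Image{\mathcal{P}}{x_0}(\ball{x_0}{\epsilon})$, a mere translate of $f(\ball{x_0}{\epsilon})$, is closed and convex, with nonempty interior inherited from the linear openness of $f$ around $x_0$. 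Thus, despite the nonconvex data of $(\mathcal{P}_{x_0,\epsilon})$, the image problem lives in a genuinely convex setting and is amenable to separation.

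With this in hand I would set
$$
   \alpha_\epsilon:=\inf\{\alpha\in\R\,:\,(\alpha,y)\in K_\epsilon,\ y\in C\};
$$
the Lipschitz behaviour of $\varphi$ on $\ball{x_0}{\epsilon}$ renders $\alpha_\epsilon$ finite, while $(0,g(x_0))\in K_\epsilon\cap(\R\times C)$ witnesses nonemptiness. To attain this infimum I would pick a minimizing sequence $(\alpha_n,y_n)\in K_\epsilon\cap(\R\times C)$; boundedness of $g$ on $\ball{x_0}{\epsilon}$ and hypothesis (ii) supply a subsequence with $y_n\rightharpoonup y^*$, and the weak closedness of $K_\epsilon$ and of $C$ (both closed convex) yields $(\alpha_\epsilon,y^*)\in K_\epsilon\cap(\R\times C)$. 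Any preimage $x_\epsilon\in\ball{x_0}{\epsilon}$ of $(\alpha_\epsilon,y^*)$ under $\Image{\mathcal{P}}{x_0}$ then solves $(\mathcal{P}_{x_0,\epsilon})$, which is (\ref{sol:optim}); if $(\alpha_\epsilon,y^*)$ lay in $\inte K_\epsilon$ then $(\alpha_\epsilon-\eta,y^*)$ would also belong to $K_\epsilon\cap(\R\times C)$ for small $\eta>0$, contradicting the minimality of $\alpha_\epsilon$, so $(\alpha_\epsilon,y^*)\in\bd K_\epsilon$ and Remark \ref{rem:Polyakteo}(iv) forces $x_\epsilon\in\bd\ball{x_0}{\epsilon}$.

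To produce the multiplier I would separate the disjoint convex sets $K_\epsilon$ and $\mathcal{Q}_\epsilon:=(-\infty,\alpha_\epsilon)\times C$ by the geometric Hahn--Banach theorem, which is legitimate because $\inte K_\epsilon\ne\varnothing$ and disjointness follows from the very definition of $\alpha_\epsilon$. This produces a nonzero pair $(t,\lambda)\in\R\times\Y^*$ such that
$$
  \sup_{(\beta,z)\in\mathcal{Q}_\epsilon}\bigl(t\beta+\langle\lambda,z\rangle\bigr)\le\inf_{(\alpha,y)\in K_\epsilon}\bigl(t\alpha+\langle\lambda,y\rangle\bigr),
$$
and finiteness of the left-hand side as $\beta\to-\infty$ forces $t\ge 0$. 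For $t>0$ the rescaling $\lambda_\epsilon:=\lambda/t$ does the job: evaluating the separation at $(\alpha_\epsilon,g(x_\epsilon))$, which lies simultaneously in $\cl\mathcal{Q}_\epsilon$ and in $K_\epsilon$, yields $\sup_{z\in C}\langle\lambda_\epsilon,z\rangle=\langle\lambda_\epsilon,g(x_\epsilon)\rangle$, which is precisely (\ref{in:nconemultip}); rewriting the inequality at a generic $x\in\ball{x_0}{\epsilon}$ with $(\alpha,y)=\Image{\mathcal{P}}{x_0}(x)$ and subtracting $\varphi(x_0)$ delivers (\ref{in:minconLagr}).

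The critical and probably trickiest step is ruling out the degenerate case $t=0$. In that situation the separation collapses to $\langle\lambda,g(x)\rangle\ge\sup_{z\in C}\langle\lambda,z\rangle$ for every $x\in\ball{x_0}{\epsilon}$; specialising to the feasible point $x_0$ forces equality there, so $\langle\lambda,g(x)-g(x_0)\rangle\ge 0$ throughout the ball. Testing with $x=x_0+sh$ for small $s$ of either sign, dividing by $s$ and passing to the limit via the first-order expansion of $g$, one obtains $\langle\lambda,\Fder g(x_0)[h]\rangle=0$ for every $h\in\X$; since surjectivity of $\Fder(\varphi,g)(x_0)$ entails surjectivity of $\Fder g(x_0)$, this forces $\lambda=\nullv^*$, contradicting the nontriviality of $(t,\lambda)$. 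This Lagrange qualification step is the delicate juncture of the argument; everything else is a tight concatenation of Theorem \ref{thm:Polyakteo}, Lemma \ref{lem:closeimage}, reflexivity-based weak compactness, and Hahn--Banach separation.
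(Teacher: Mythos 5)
Your proof is correct and follows essentially the same route as the paper's: apply Theorem \ref{thm:Polyakteo} to $\Image{\mathcal{P}}{x_0}$, attain the infimal value over the closed, convex, weakly compact image by exploiting reflexivity and Mazur's theorem, separate from the convex set built on $C$, and exclude the degenerate multiplier via surjectivity of $\Fder g(x_0)$. The only differences are cosmetic: the paper anchors the separation at the translated image $\Image{\mathcal{P}}{x_\epsilon}(\ball{x_0}{\epsilon})$ against $\mathcal{Q}=(-\infty,0)\times C$, and rules out $\rho_\epsilon=0$ through the linear openness of $g$ rather than your first-order expansion argument, but both rest on the same facts.
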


\begin{proof}
Consider mapping $\Image{\mathcal{P}}{x_0}:\X\longrightarrow
\R\times\Y$ associated with problem $(\mathcal{P})$ according to
$(\ref{eq:imagemap})$.
Under the current hypotheses Theorem \ref{thm:Polyakteo}
ensures the existence of $\epsilon_0>0$ such that $\Image{\mathcal{P}}
{x_0}(\ball{x_0}{\epsilon})$ is convex for every $\epsilon\in
(0,\epsilon_0]$. Fix an arbitrary $\epsilon\in (0,\epsilon_0]$ and
define
\begin{equation}    \label{eq:detauinf}
  \tau=\inf\{t:\ (t,y)\in\Image{\mathcal{P}}{x_0}(\ball{x_0}{\epsilon})\cap
   \mathcal{Q}\}.
\end{equation}
Notice that, since $x_0$ can not be a solution to $(\mathcal{P})$
(nor even a local one) because of hypothesis $(iii)$, then
$\Image{\mathcal{P}}{x_0}(\ball{x_0}{\epsilon})\cap\mathcal{Q}
\ne\varnothing$. For the same reason, it is readily seen that
$$
    \tau=\inf\{t:\ (t,y)\in\Image{\mathcal{P}}{x_0}(\ball{x_0}{\epsilon})\cap
   ((-\infty,0]\times C)\}.
$$
According to Lemma \ref{lem:closeimage} set $\Image{\mathcal{P}}{x_0}
(\ball{x_0}{\epsilon})$ can be assumed to be closed. As a closed convex set, by the
Mazur's theorem it is also weakly closed and, by continuity of $f$, bounded.
Therefore, being $(\Y,\|\cdot\|)$ reflexive, $\Image{\mathcal{P}}{x_0}(\ball
{x_0}{\epsilon})$ turns out to be weakly compact. Since $(-\infty,0]
\times C$ is weakly closed as well, again by the Mazur's theorem,
then also $\Image{\mathcal{P}}{x_0}(\ball{x_0}{\epsilon})\cap
((-\infty,0]\times C)$ turns out to be a weakly compact subset
of $\R\times\Y$. The projection function $(t,y)\mapsto t$ is
continuous and convex and thereby it is also lower semicontinuous
with respect to the weak topology, again as a consequence
of the Mazur's theorem. Thus, the infimum in $(\ref{eq:detauinf})$
is actually attained. In other words, there exists $(\hat t,\hat y)
\in\Image{\mathcal{P}}{x_0}(\ball{x_0}{\epsilon})\cap
\mathcal{Q}$, where $\hat t=\tau$. By definition of
$\Image{\mathcal{P}}{x_0}$, this means that there exists $\hat
x\in\ball{x_0}{\epsilon}$ such that
$$
    \hat t=\varphi(\hat x)-\varphi(x_0)<0,\qquad
   \hat y=g(\hat x)\in C.
$$
Therefore it is possible to set $x_\epsilon=\hat x$ to get the first
assertion in the thesis. Indeed, assume ab absurdo the existence
of $\tilde x\in\ball{x_0}{\epsilon}\cap R$ such that $\varphi(\tilde x)<
\varphi(\hat x)$. Then, it follows
$$
    \varphi(\tilde x)-\varphi(x_0)=\varphi(\tilde x)-\varphi(\hat x)
    +\varphi(\hat x)-\varphi(x_0)<\varphi(\hat x)-\varphi(x_0)=
   \tau.
$$
Consequently, it is $( \varphi(\tilde x)-\varphi(x_0),g(\tilde x))
\in\Image{\mathcal{P}}{x_0}(\ball{x_0}{\epsilon})\cap\mathcal{Q}$,
but such an inclusion clearly contradicts the definition of $\tau$.
Observe that, being $(\hat t,\hat y)\in\bd \Image{\mathcal{P}}{x_0}
(\ball{x_0}{\epsilon})$, then according to what noticed in Remark
\ref{rem:Polyakteo} $(iii)$, it is $\hat x\in\bd\ball{x_0}{\epsilon}$.

The second part of the thesis is a straightforward consequence
of the first one. Note that, by optimality of $\hat x$, one has
$$
   \Image{\mathcal{P}}{\hat x}(\ball{x_0}{\epsilon})\cap\mathcal{Q}
  =\varnothing.
$$
Moreover, being
$$
    \Image{\mathcal{P}}{\hat x}(x)=  \Image{\mathcal{P}}{x_0}(x)+
   w_{x_0,\hat x},\quad\forall x\in\X,
$$
where $w_{x_0,\hat x}=(\varphi(x_0)-\varphi(\hat x),\nullv)$, then
$\Image{\mathcal{P}}{\hat x}(\ball{x_0}{\epsilon})$ is a mere translation
of $\Image{\mathcal{P}}{x_0}(\ball{x_0}{\epsilon})$. Thus, set
$\Image{\mathcal{P}}{\hat x}(\ball{x_0}{\epsilon})$ is a convex
subset of $\R\times\Y$ with nonempty interior (recall Remark
\ref{rem:Polyakteo} $(iii)$) and disjoint from $\mathcal{Q}$.
According to the Eidelheit's theorem (see, for example,
\cite{Zali02}), it is
then possible to linearly separate $\Image{\mathcal{P}}{\hat x}
(\ball{x_0}{\epsilon})$ and $\cl\mathcal{Q}$, what means that
there exist $(\rho_\epsilon,\lambda_\epsilon)\in (\R\times\Y^*)
\backslash\{(0,\nullv^*)\}$ and $\alpha\in\R$ such that
\begin{equation}     \label{in:separ1}
    \rho_\epsilon(\varphi(x)-\varphi(\hat x))+\langle\lambda_\epsilon
   ,g(x)\rangle\ge\alpha,\quad\forall x\in\ball{x_0}{\epsilon},
\end{equation}
and
\begin{equation}      \label{in:separ2}
    \rho_\epsilon t+\langle \lambda_\epsilon,y\rangle\le\alpha,
   \quad\forall (t,y)\in \cl\mathcal{Q}=(-\infty,0]\times C.
\end{equation}
If taking $x=\hat x$ in inequality $(\ref{in:separ1})$, one finds
$$
    \langle\lambda_\epsilon,g(\hat x)\rangle\ge\alpha.
$$
On the other hand, being $(0,g(\hat x))\in\cl\mathcal{Q}$, from
inequality $(\ref{in:separ2})$ one gets
$$
    \langle\lambda_\epsilon,g(\hat x)\rangle\le\alpha,
$$
wherefrom one deduces
\begin{equation}    \label{eq:nonrigcomp}
   \langle\lambda_\epsilon,g(\hat x)\rangle=\alpha.
\end{equation}
If taking now an arbitrary $y\in C$, then being $(0,y)\in\cl
\mathcal{Q}$, by inequality $(\ref{in:separ2})$ it results in
$$
     \langle\lambda_\epsilon,y\rangle\le\alpha,
$$
and hence
$$
     \langle\lambda_\epsilon,y-g(\hat x)\rangle\le 0,\quad\forall
    y\in C.
$$
This shows that $\lambda_\epsilon\in\ncone{g(\hat x)}{C}$. Again, since
$(-1,g(\hat x))\in\cl\mathcal{Q}$, from inequality $(\ref{in:separ2})$
it follows that $\rho_\epsilon\ge 0$. Let us show now that, under
the current hypotheses, actually it is $\rho_\epsilon>0$, so up to a
rescaling of $\lambda_\epsilon$ it is possible to take $\rho_\epsilon=1$.
Indeed, assume to the contrary that $\rho_\epsilon=0$. Since by
virtue of hypothesis $(iii)$ mapping $g$ is metrically regular
around $(x_0,g(x_0))$, one has
$$
    g(\ball{x_0}{r})\supseteq\ball{g(x_0)}{\sigma r}
$$
for positive $\sigma$ and $r<\epsilon$. From inequality $(\ref{in:separ1})$
it follows
$$
   \langle\lambda_\epsilon,g(x_0)+\eta u\rangle\ge\alpha,
   \quad\forall u\in\Sfer,
$$
with $0<\eta<\sigma r$. Being $(0,g(x_0))\in\cl\mathcal{Q}$,
owing to $(\ref{in:separ2})$ one has
$$
   \langle\lambda_\epsilon,g(x_0)\rangle\le\alpha.
$$
Thus, one finds
$$
   \eta\langle\lambda_\epsilon,u\rangle \ge\alpha-
   \langle\lambda_\epsilon,g(x_0)\rangle\ge 0,\quad\forall u\in\Sfer,
$$
which can not be consistent with the fact that $\lambda_\epsilon
\ne\nullv^*$ (remember that $(\rho_\epsilon,\lambda_\epsilon)\in
(\R\times\Y^*)\backslash\{(0,\nullv^*)\}$).

Finally, by using equality
$(\ref{eq:nonrigcomp})$ in $(\ref{in:separ1})$, one obtains
$$
    \Lagr{\lambda_\epsilon}{x}=\varphi(x)+
   \langle\lambda_\epsilon,g(x)\rangle\ge
   \varphi(\hat x)+\langle\lambda_\epsilon,g(\hat x)\rangle=
   \Lagr{\lambda_\epsilon}{\hat x},\quad\forall x\in
   \ball{x_0}{\epsilon}.
$$
This completes the proof.
\end{proof}

\begin{remark}
(i) In a finite dimensional setting the existence of a solution
to $(\mathcal{P}_{x_0,\epsilon})$ is automatic, as an obvious
consequence of the Weierstrass theorem. In that case, indeed,
set $\ball{x_0}{\epsilon}\cap R$ is compact, $g$ being continuous
near $x_0$. If $\X$ is infinite dimensional the solution existence
becomes a by-product of the convexity hidden in the problem
localization. Observe that, since $\ball{x_0}{\epsilon}\cap R$ is
not necessarily convex, it may fail to be weakly closed. Analogously,
since $\varphi$ is not convex, nothing can be said about its weak
lower semicontinuity. Therefore, arguments based on weak compactness
in a reflexive space can not be invoked directly, without passing
through the convexity principle.

(ii) The optimality condition expressed by $(\ref{in:nconemultip})$
and $(\ref{in:minconLagr})$ can be regarded as another manifestation
of the convexity behaviour of $(\mathcal{P}_{x_0,\epsilon})$. The
property for a solution to be minimal also for the Lagrangian
function $\Lagr{\lambda_\epsilon}{\cdot}$, while it is typical in
convex optimization, is a circumstance generally failing in nonlinear
programming.

(iii) A feature of Theorem \ref{pro:Lagroptcond} to be underlined
is that such result guarantees the existence of regular Lagrange
multipliers, i.e. multipliers with nonnull first component $\rho_\epsilon$.
\end{remark}

%%%%%%%%%%%%%%%%%%%%%%%%%%%%%%%%%%%%%%%%%%%%%%%%%%%%%%%

\subsection{Lagrangian duality}

In its general form, a Lagrangian duality scheme can be defined
whenever the following elements are given: a function $\mathscr{L}:
A\times B\longrightarrow\R\cup\{\pm\infty\}$, where $A$ and $B$ are
arbitrary sets, and subsets $S_A\subseteq A$ and $S_B\subseteq B$.
In considering the extremum problems
$$
    \min_{b\in S_B}\sup_{a\in S_A}\NlLagr{a}{b}   
   \leqno (\mathcal{P}_\mathscr{L}) 
$$
and
$$
    \max_{a\in S_A} \inf_{b\in S_B}\NlLagr{a}{b},   
   \leqno (\mathcal{P}_\mathscr{L}^*) 
$$
a Lagrangian duality scheme singles out two fundamental concepts:
one is the {\em duality gap}, i.e. the difference of the respective optimal values
of the problems $(\mathcal{P}_\mathscr{L})$ and $(\mathcal{P}_\mathscr{L}^*)$
$$
    \min_{b\in S_B}\sup_{a\in S_A}\NlLagr{a}{b}-
    \max_{a\in S_A} \inf_{b\in S_B}\NlLagr{a}{b},
$$
provided that such difference is defined (it is, if  $(\mathcal{P}_\mathscr{L})$ and
$(\mathcal{P}_\mathscr{L}^*)$ do not happen to have the same infinite optimal value).
The other one is a {\em saddle point} for $\mathscr{L}$, i.e. any
element $(\bar a,\bar b)\in\ S_A\times S_B$ satisying the inequalities
$$
    \NlLagr{a}{\bar b}\le\NlLagr{\bar a}{\bar b}\le\NlLagr{\bar a}{b},
    \quad\forall (a,b)\in S_A\times S_B.
$$
In this context, any function such as $\mathscr{L}$
is usually called the Lagrangian function
associated with the duality scheme. In such a general setting, the
following well-known proposition explains the role of the aforementioned
concepts (for its proof, which is elementary, see for instance
\cite{BonSha00})

\begin{proposition}     \label{pro:dualgap}
Whenever it is defined, the duality gap is nonnegative, that is
$$
    \sup_{a\in S_A} \inf_{b\in S_B}\NlLagr{a}{b}\le
    \inf_{b\in S_B}\sup_{a\in S_A}\NlLagr{a}{b}.
$$
Moreover, the function $\mathscr{L}$ admits a saddle point iff
problems $(\mathcal{P}_\mathscr{L})$ and $(\mathcal{P}_\mathscr{L}^*)$
share the same optimal value and each has nonempty set of optimal
solutions. In that case the set of saddle points for $\mathscr{L}$
coincides with the Cartesian product of the respective optimal
solution sets.
\end{proposition}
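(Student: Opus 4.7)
The strategy is to establish the weak duality inequality first and then leverage it to characterize saddle points. For the first statement, I would fix an arbitrary pair $(a_0,b_0)\in S_A\times S_B$ and exploit the trivial sandwich
$$\inf_{b\in S_B}\NlLagr{a_0}{b}\le\NlLagr{a_0}{b_0}\le\sup_{a\in S_A}\NlLagr{a}{b_0}.$$
Since the left-hand side does not depend on $b_0$ and the right-hand side does not depend on $a_0$, taking the supremum over $a_0\in S_A$ on the left and then the infimum over $b_0\in S_B$ on the right yields the asserted weak duality inequality, with both sides well-defined in the extended reals.

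For the second statement, I would begin with the implication \emph{saddle point $\Longrightarrow$ common optimal value and nonempty solution sets}. Given a saddle point $(\bar a,\bar b)$, the left saddle inequality $\NlLagr{a}{\bar b}\le\NlLagr{\bar a}{\bar b}$ for every $a\in S_A$ immediately gives $\sup_{a\in S_A}\NlLagr{a}{\bar b}=\NlLagr{\bar a}{\bar b}$ (the reverse inequality being trivial); symmetrically, $\inf_{b\in S_B}\NlLagr{\bar a}{b}=\NlLagr{\bar a}{\bar b}$. Plugging these identities into the chain
$$\inf_{b\in S_B}\sup_{a\in S_A}\NlLagr{a}{b}\le\sup_{a\in S_A}\NlLagr{a}{\bar b}=\NlLagr{\bar a}{\bar b}=\inf_{b\in S_B}\NlLagr{\bar a}{b}\le\sup_{a\in S_A}\inf_{b\in S_B}\NlLagr{a}{b}$$
and combining with weak duality forces all terms to coincide with $\NlLagr{\bar a}{\bar b}$. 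Hence $\bar b$ solves $(\mathcal{P}_\mathscr{L})$, $\bar a$ solves $(\mathcal{P}_\mathscr{L}^*)$, and the two problems share the same optimal value.

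For the converse, assume the two problems share a common optimal value $v$ and let $\bar a$ and $\bar b$ be respective optimal solutions. Then the equalities $\sup_{a\in S_A}\NlLagr{a}{\bar b}=v=\inf_{b\in S_B}\NlLagr{\bar a}{b}$ yield $\NlLagr{a}{\bar b}\le v\le\NlLagr{\bar a}{b}$ for every $(a,b)\in S_A\times S_B$; specializing at $(a,b)=(\bar a,\bar b)$ forces $\NlLagr{\bar a}{\bar b}=v$ and restores both saddle inequalities. The same argument shows that any pair consisting of a solution to $(\mathcal{P}_\mathscr{L}^*)$ and a solution to $(\mathcal{P}_\mathscr{L})$ is a saddle point, while the forward implication yields the reverse inclusion, thereby identifying the set of saddle points with the Cartesian product of the two optimal solution sets. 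The whole argument is a purely formal manipulation of $\sup$'s and $\inf$'s, so no substantive obstacle arises; the only delicate point is careful bookkeeping of the quantifiers in the chain above so as not to tacitly invoke the weak duality inequality in the wrong direction.
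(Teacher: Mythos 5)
Your proof is correct: the weak duality inequality via the sandwich $\inf_b\mathscr{L}(a_0;b)\le\mathscr{L}(a_0;b_0)\le\sup_a\mathscr{L}(a;b_0)$, and the two-way saddle-point characterization obtained by collapsing the chain of inequalities, are exactly the standard elementary argument. The paper does not prove this proposition itself but defers to Bonnans--Shapiro, and your argument coincides with the one intended there, so there is nothing to add.
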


In view of the above result, it becomes crucial to find out
verifiable conditions on problem data, under which a saddle point
exists.

Now, as a consequence of Theorem \ref{pro:Lagroptcond},
it turns out that a Lagrangian duality scheme, through a localization
of problem $(\mathcal{P})$, can be performed by making use of the
simplest type of Lagrangian function, namely the linear one,
provided that $C$ is a cone. Thus, in such event, the extremum
problems in duality are
$$
   \min_{x\in\ball{x_0}{\epsilon}}\sup_{y^*\in C^\ominus}
    \Lagr{y^*}{x} \leqno (\mathcal{P}_{\rm L}) 
$$
and
$$
   \max_{y^*\in C^\ominus}\inf_{x\in\ball{x_0}{\epsilon}}
    \Lagr{y^*}{x}.     \leqno (\mathcal{P}_{\rm L}^*)
$$

\begin{theorem}
With reference to problem $(\mathcal{P})$, suppose that $C$ is
a nonempty closed convex cone and $x_0\in R$.
Under the hypotheses of Theorem \ref{pro:Lagroptcond},
there exists $\epsilon_0>0$ such that for every
$\epsilon\in (0,\epsilon_0]$ there are $(x_\epsilon,\lambda_\epsilon)\in
\bd\ball{x_0}{\epsilon}\times (C^\ominus\cap g(x_\epsilon)^\perp)$
such that $(x_\epsilon,\lambda_\epsilon)$ is a saddle point for ${\rm L}$.
Consequently, the related duality gap is $0$ and both the primal and
the dual problem have nonempty solution sets.
\end{theorem}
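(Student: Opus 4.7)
The plan is to reduce the theorem directly to Theorem \ref{pro:Lagroptcond} and Proposition \ref{pro:dualgap}, exploiting the conic structure of $C$ to translate the normal-cone membership provided by the variational principle into the orthogonality relation required by the thesis. The obstacle here is not really substantive; once one has the multipliers supplied by Theorem \ref{pro:Lagroptcond}, the work amounts to unpacking what $\ncone{g(x_\epsilon)}{C}$ becomes when $C$ is a cone and checking the two saddle-point inequalities.

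Concretely, I would first invoke Theorem \ref{pro:Lagroptcond} to obtain $\epsilon_0>0$ such that for each $\epsilon\in(0,\epsilon_0]$ there exist $x_\epsilon\in\bd\ball{x_0}{\epsilon}$ solving $(\mathcal{P}_{x_0,\epsilon})$ and $\lambda_\epsilon\in\Y^*$ with $\lambda_\epsilon\in\ncone{g(x_\epsilon)}{C}$ and $\Lagr{\lambda_\epsilon}{x_\epsilon}\le\Lagr{\lambda_\epsilon}{x}$ for all $x\in\ball{x_0}{\epsilon}$. Since $C$ is a closed convex cone and $g(x_\epsilon)\in C$, a standard identity of convex analysis yields
$$
   \ncone{g(x_\epsilon)}{C}=\{y^*\in C^\ominus:\ \langle y^*,g(x_\epsilon)\rangle=0\}=C^\ominus\cap g(x_\epsilon)^\perp,
$$
so $\lambda_\epsilon$ lies in the set prescribed by the thesis; in particular $\langle\lambda_\epsilon,g(x_\epsilon)\rangle=0$.

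Next I would verify the saddle-point inequalities
$$
   \Lagr{y^*}{x_\epsilon}\le\Lagr{\lambda_\epsilon}{x_\epsilon}\le\Lagr{\lambda_\epsilon}{x},
   \quad\forall (y^*,x)\in C^\ominus\times\ball{x_0}{\epsilon}.
$$
The right inequality is exactly $(\ref{in:minconLagr})$ from Theorem \ref{pro:Lagroptcond}. For the left one, since $g(x_\epsilon)\in C$, the definition of the negative dual cone gives $\langle y^*,g(x_\epsilon)\rangle\le 0$ for every $y^*\in C^\ominus$, whereas $\langle\lambda_\epsilon,g(x_\epsilon)\rangle=0$ from the previous paragraph. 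Consequently
$$
   \Lagr{y^*}{x_\epsilon}=\varphi(x_\epsilon)+\langle y^*,g(x_\epsilon)\rangle\le\varphi(x_\epsilon)=\varphi(x_\epsilon)+\langle\lambda_\epsilon,g(x_\epsilon)\rangle=\Lagr{\lambda_\epsilon}{x_\epsilon},
$$
which is the required left inequality. Thus $(\lambda_\epsilon,x_\epsilon)$ (or $(x_\epsilon,\lambda_\epsilon)$ in the convention of the thesis) is a saddle point of ${\rm L}$ on $C^\ominus\times\ball{x_0}{\epsilon}$.

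Finally, the statements about the duality gap and the nonemptiness of the solution sets for $(\mathcal{P}_{\rm L})$ and $(\mathcal{P}_{\rm L}^*)$ follow at once by applying Proposition \ref{pro:dualgap} with $\mathscr{L}={\rm L}$, $S_A=C^\ominus$ and $S_B=\ball{x_0}{\epsilon}$: existence of a saddle point is equivalent to equality of the two optimal values together with attainment on both sides. No additional compactness or convexity hypothesis is needed at this last step, since Proposition \ref{pro:dualgap} is purely abstract.
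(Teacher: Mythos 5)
Your proposal is correct and follows essentially the same route as the paper: extract $x_\epsilon$ and $\lambda_\epsilon$ from Theorem \ref{pro:Lagroptcond}, observe that for a cone $C$ the normal-cone condition forces $\lambda_\epsilon\in C^\ominus\cap g(x_\epsilon)^\perp$ (the paper derives this by substituting $y=2g(x_\epsilon)$ and $y=\nullv$, where you cite it as a standard identity --- the same content, and only the inclusion is actually needed), then verify the two saddle-point inequalities exactly as you do and conclude via Proposition \ref{pro:dualgap}. No gaps.
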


\begin{proof}
It is readily seen that if $C$ is a cone and $g(x_\epsilon)\in C$,
the inclusion $\lambda_\epsilon\in\ncone{g(x_\epsilon)}{C}$ implies
$\lambda_\epsilon\in C^\ominus\cap g(x_\epsilon)^\perp$. Indeed,
if taking $y=2g(x_\epsilon)$ and $y=\nullv$ in the inequality
$$
    \langle \lambda_\epsilon,y-g(x_\epsilon)\rangle\le 0,
$$
one obtains two inequalities, which can be consistent only if
$\lambda_\epsilon\in g(x_\epsilon)^\perp$. Taking this fact into
account, the last inequality gives also $\lambda_\epsilon\in
C^\ominus$. Take $\epsilon\in (0,\epsilon_0]$, where $\epsilon_0$
is as in Theorem \ref{pro:Lagroptcond}. By applying inequality
$(\ref{in:minconLagr})$, one finds
$$
   \Lagr{\lambda}{x_\epsilon}=\varphi(x_\epsilon)+\langle\lambda,
  g(x_\epsilon)\rangle\le\varphi(x_\epsilon)=
  \Lagr{\lambda_\epsilon}{x_\epsilon}\le\Lagr{\lambda_\epsilon}{x},
   \quad\forall (\lambda,x)\in C^\ominus\times\ball{x_0}{\epsilon}.
$$
The last assertion in the thesis immediately follows from Proposition
\ref{pro:dualgap}.
\end{proof}

%%%%%%%%%%%%%%%%%%%%%%%%%%%%%%%%%%%%%%%%%%%%%%%%%%%%%%

\subsection{Problem calmness}

This subsection focuses on some properties of constrained
extremum problems in the presence of perturbations. The perturbation
analysis of optimization problems has revealed to be able to afford
useful theoretical insights into the very nature of the issue.
The format of parametric problems here in consideration is as follows
$$
    \min_{x\in\X} \varphi(x)\quad\hbox{subject to}\quad g(x)+y\in C,
    \leqno (\mathcal{P}_y)
$$
where $y\in\Y$ plays the role of a parameter. The corresponding
feasible region is given therefore by $R(y)=g^{-1}(C-y)$. 
A notion capturing a sensibility behaviour with respect to perturbations
near a reference value is that of problem calmness. Proposed by
R.T. Rockafellar, such notion appeared firstly in \cite{Clar76}
and since then it was largely employed in perturbation analysis
of optimization problems and related fields.

\begin{definition}
With reference to a class of problems $(\mathcal{P}_y)$, let
$\hat x\in R(\nullv)$ be a solution to $(\mathcal{P}_\nullv)$.
Problem $(\mathcal{P}_\nullv)$ is said to be {\em calm} at $\hat x$ if
there exists a constant $r>0$ such that
$$
   \inf_{y\in r\B\backslash\{\nullv\}}\inf_{x\in R(y)\cap
  \ball{\hat x}{r}}\frac{\varphi(x)-\varphi(\hat x)}{\|y\|}
   >-\infty.
$$
\end{definition}

Following a successful approach to this topic, sufficient conditions for
problem calmness can be achieved by studying the localized (optimal)
value function associated with $(\mathcal{P}_y)$, i.e. function
$\val{x_0,\epsilon}:\Y\longrightarrow\R\cup\{\pm\infty\}$ defined by
$$
    \val{x_0,\epsilon}(y)=\inf_{x\in R(y)\cap\ball{x_0}{\epsilon}}
   \varphi(x).
$$
In particular, the property of $\val{x_0,\epsilon}$ to be calm from
below at $\nullv$ appeared to be adequate to this aim. Recall that
a function $\phi:\Y\longrightarrow\R\cup\{\pm\infty\}$ is said to be
{\em calm from below} at $y_0$ if $y_0\in\dom\phi$ and it holds
$$
     \liminf_{y\to y_0}\frac{\phi(y)-\phi(y_0)}{\|y-y_0\|}>-\infty.
$$
In turn, calmness from below for function can be easily
obtained from the subdifferentiability property.

\begin{theorem}       \label{them:subdifval}
Given a class of perturbed problem $(\mathcal{P}_y)$, let $x_0
\in R(\nullv)\cap\Omega$, where $\Omega$ is an open subset
of $\X$. Suppose that:

\noindent $(i)$ $(\X,\|\cdot\|)$ is uniformly convex and its
convexity modulus fulfils condition $(\ref{in:modcocond})$;

\noindent $(ii)$ $(\Y,\|\cdot\|)$ is a reflexive Banach space; 

\noindent $(iii)$ $\varphi,\, g\in\CII(\Omega)$ and $\Fder(\varphi,g)
(x_0)\in\Lbo(\X,\R\times\Y)$ is onto.

\noindent Then, there exists a positive $\epsilon_0$ such that 
for every $\epsilon\in (0,\epsilon_0]$ it holds
$$
   \partial \val{x_0,\epsilon}(\nullv)\ne\varnothing.
$$
Consequently, function $\val{x_0,\epsilon}$ is calm from below at
$\nullv$.
\end{theorem}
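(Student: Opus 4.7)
The plan is to invoke Theorem \ref{pro:Lagroptcond} and then convert the Lagrangian minimality property it supplies into a global affine minorization for the localized value function at $\nullv$. Under the stated hypotheses that theorem furnishes $\epsilon_0>0$ such that for each $\epsilon\in(0,\epsilon_0]$ there exist $x_\epsilon\in\bd\ball{x_0}{\epsilon}$ and $\lambda_\epsilon\in\ncone{g(x_\epsilon)}{C}$ with $\Lagr{\lambda_\epsilon}{x_\epsilon}\le\Lagr{\lambda_\epsilon}{x}$ for every $x\in\ball{x_0}{\epsilon}$, where $x_\epsilon$ solves $(\mathcal{P}_{x_0,\epsilon})$. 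In particular, since $R(\nullv)\cap\ball{x_0}{\epsilon}$ is exactly the feasible region of $(\mathcal{P}_{x_0,\epsilon})$, one has $\val{x_0,\epsilon}(\nullv)=\varphi(x_\epsilon)\in\R$, so $\nullv\in\dom\val{x_0,\epsilon}$.

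Next I would establish the global subgradient inequality
$$
   \val{x_0,\epsilon}(y)\ge\val{x_0,\epsilon}(\nullv)+\langle\lambda_\epsilon,y\rangle,
   \quad\forall y\in\Y.
$$
Fix $y\in\Y$: if $R(y)\cap\ball{x_0}{\epsilon}=\varnothing$ the inequality is vacuous, otherwise pick any $x$ in that intersection. Then $g(x)+y\in C$, and the definition of the normal cone yields $\langle\lambda_\epsilon,g(x)+y-g(x_\epsilon)\rangle\le 0$, i.e.\ $\langle\lambda_\epsilon,g(x)\rangle\le\langle\lambda_\epsilon,g(x_\epsilon)\rangle-\langle\lambda_\epsilon,y\rangle$. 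Combining this with the Lagrangian inequality $\varphi(x_\epsilon)+\langle\lambda_\epsilon,g(x_\epsilon)\rangle\le\varphi(x)+\langle\lambda_\epsilon,g(x)\rangle$ gives $\varphi(x)\ge\varphi(x_\epsilon)+\langle\lambda_\epsilon,y\rangle$, whence passing to the infimum over $x\in R(y)\cap\ball{x_0}{\epsilon}$ delivers the displayed inequality. This exhibits $\lambda_\epsilon$ as a subgradient of $\val{x_0,\epsilon}$ at $\nullv$ in the strongest possible sense, namely through a global affine minorant, so it lies in every reasonable subdifferential (convex, Fr\'echet, basic/limiting); in particular $\partial\val{x_0,\epsilon}(\nullv)\ne\varnothing$.

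Calmness from below then falls out immediately: dividing the subgradient inequality by $\|y\|$ and using $\langle\lambda_\epsilon,y\rangle\ge-\|\lambda_\epsilon\|\cdot\|y\|$ one obtains
$$
    \liminf_{y\to\nullv}\frac{\val{x_0,\epsilon}(y)-\val{x_0,\epsilon}(\nullv)}{\|y\|}
    \ge -\|\lambda_\epsilon\|>-\infty.
$$
I expect no real obstacle beyond careful bookkeeping: the substantive work has already been done by Theorem \ref{pro:Lagroptcond}, so the remaining task is essentially an algebraic manipulation combining the Lagrangian inequality with the normal cone condition. The only delicate point worth double-checking is that the infimum on the right-hand side of the value function is actually attained (or at least not $-\infty$) for $y$ near $\nullv$, but this is ensured by the global bound from the subgradient inequality itself.
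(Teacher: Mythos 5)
Your proposal is correct and follows essentially the same route as the paper's proof: apply Theorem \ref{pro:Lagroptcond} to $(\mathcal{P}_\nullv)$, use the normal cone condition $\lambda_\epsilon\in\ncone{g(x_\epsilon)}{C}$ together with $g(x)+y\in C$ to bound $\langle\lambda_\epsilon,g(x)-g(x_\epsilon)\rangle$ by $-\langle\lambda_\epsilon,y\rangle$, combine with the Lagrangian minimality to obtain the global subgradient inequality, and divide by $\|y\|$ for calmness from below. Your explicit handling of the vacuous case $R(y)\cap\ball{x_0}{\epsilon}=\varnothing$ and the identification $\val{x_0,\epsilon}(\nullv)=\varphi(x_\epsilon)$ are details the paper leaves implicit, but they do not change the argument.
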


\begin{proof}
The subdifferentiability of $\val{x_0,\epsilon}$ at $\nullv$ can be
achieved as a further consequence of the possibility of separating
$\Image{\mathcal{P}}{x_0}(\ball{x_0}{\epsilon})$ and $\cl\mathcal{Q}$
for every $\epsilon\in (0,\epsilon_0]$, where $\epsilon_0$ is a
positive constant as in Theorem \ref{pro:Lagroptcond}. Indeed,
fix an arbitrary $y\in\Y$. By applying Theorem \ref{pro:Lagroptcond}
to $(\mathcal{P}_\nullv)$, one gets $x_\epsilon\in\bd\ball{x_0}
{\epsilon}$ and $\lambda_\epsilon\in\Y^*$ satisfying $(\ref{sol:optim})$,
$(\ref{in:nconemultip})$ and $(\ref{in:minconLagr})$. It follows
\begin{equation}      \label{in:lagrmin}
   \langle\lambda_\epsilon,g(x_\epsilon)\rangle+\varphi(x_\epsilon)
   \le\varphi(x)+\langle\lambda_\epsilon,g(x)\rangle,\quad\forall
   x\in\ball{x_0}{\epsilon}.
\end{equation}
Because of $(\ref{in:nconemultip})$, whenever $x\in R(y)\cap
\ball{x_0}{\epsilon}$, being $g(x)+y\in C$ one has
$$
    \langle\lambda_\epsilon,g(x)-g(x_\epsilon)\rangle\le
   -\langle\lambda_\epsilon,y\rangle.
$$
The last inequality on account of $(\ref{in:lagrmin})$ gives
$$
    0\le\varphi(x)-\varphi(x_\epsilon)+\langle\lambda_\epsilon,
    g(x)-g(x_\epsilon)\rangle\le
    \varphi(x)-\varphi(x_\epsilon)-\langle\lambda_\epsilon,y\rangle,
    \quad\forall x\in R(y)\cap\ball{x_0}{\epsilon},
$$
whence 
$$
   \langle\lambda_\epsilon,y\rangle\le\inf_{x\in R(y)\cap\ball{x_0}
  {\epsilon}}\varphi(x)-\varphi(x_\epsilon)=
   \val{x_0,\epsilon}(y)-\val{x_0,\epsilon}(\nullv).
$$
By arbitrariness of $y\in\Y$ the first assertion in the thesis
is proved. The second one is a straightforward consequence
of the first one. Indeed, obviously $\nullv\in\dom\val{x_0,\epsilon}$
and it holds
$$
   \liminf_{y\to\nullv}\frac{\val{x_0,\epsilon}(y)-
   \val{x_0,\epsilon}(\nullv)}{\|y\|}\ge\inf_{u\in\Sfer}
   \langle\lambda_\epsilon,u\rangle\ge
   -\|\lambda_\epsilon\|>-\infty.
$$
This completes the proof.
\end{proof}

\begin{corollary}
Under the hypotheses of Theorem \ref{them:subdifval}, there exists
$\epsilon_0>0$ such that, for every $\epsilon\in (0,\epsilon_0]$,
$(\mathcal{P}_\nullv)$ admits a corresponding $\epsilon$-localization,
which is calm at a respective solution $x_\epsilon\in\ball{x_0}
{\epsilon}$.
\end{corollary}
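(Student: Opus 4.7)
The plan is to reduce problem calmness of the localization $(\mathcal{P}_{x_0,\epsilon})$ at $x_\epsilon$ to the calmness from below of the localized value function $\val{x_0,\epsilon}$ at $\nullv$, which has already been established in Theorem \ref{them:subdifval}. First, I would take $\epsilon_0$ to be the constant provided jointly by Theorem \ref{pro:Lagroptcond} and Theorem \ref{them:subdifval}, so that for every $\epsilon\in(0,\epsilon_0]$ both the distinguished solution $x_\epsilon\in\bd\ball{x_0}{\epsilon}$ exists and $\val{x_0,\epsilon}$ is calm from below at $\nullv$. The relevant class of perturbed problems to consider is the family obtained from the $\epsilon$-localization $(\mathcal{P}_{x_0,\epsilon})$, whose feasible region at parameter $y$ is $R(y)\cap\ball{x_0}{\epsilon}$.

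Next I would unwind what problem calmness of this localized family at $x_\epsilon$ means: there must exist $r>0$ such that
\[
\inf_{y\in r\B\setminus\{\nullv\}}\ \inf_{x\in R(y)\cap\ball{x_0}{\epsilon}\cap\ball{x_\epsilon}{r}}\frac{\varphi(x)-\varphi(x_\epsilon)}{\|y\|}>-\infty.
\]
Since the additional restriction $\ball{x_\epsilon}{r}$ only shrinks the inner feasible set, it is enough to bound the unrestricted ratio over $R(y)\cap\ball{x_0}{\epsilon}$ from below. Observing that $\val{x_0,\epsilon}(\nullv)=\varphi(x_\epsilon)$ by optimality of $x_\epsilon$ in $(\mathcal{P}_{x_0,\epsilon})$, and that for every admissible $x\in R(y)\cap\ball{x_0}{\epsilon}$ one has $\val{x_0,\epsilon}(y)\le\varphi(x)$, it follows that
\[
\frac{\varphi(x)-\varphi(x_\epsilon)}{\|y\|}\ge\frac{\val{x_0,\epsilon}(y)-\val{x_0,\epsilon}(\nullv)}{\|y\|}.
\]

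Finally, I would invoke Theorem \ref{them:subdifval}, which yields $\lambda_\epsilon\in\partial\val{x_0,\epsilon}(\nullv)$ and hence the quantitative lower bound $\val{x_0,\epsilon}(y)-\val{x_0,\epsilon}(\nullv)\ge\langle\lambda_\epsilon,y\rangle\ge-\|\lambda_\epsilon\|\,\|y\|$ for all $y\in\Y$. Inserting this into the previous inequality gives $\varphi(x)-\varphi(x_\epsilon)\ge-\|\lambda_\epsilon\|\,\|y\|$ uniformly over $x\in R(y)\cap\ball{x_0}{\epsilon}$, which delivers the required finite infimum and completes the proof. Because all the real work has already been performed in Theorem \ref{them:subdifval} (via the convexity of $\Image{\mathcal{P}}{x_0}(\ball{x_0}{\epsilon})$ and the ensuing separation yielding a subgradient), no step here appears genuinely delicate; the only minor subtlety to get right is that calmness for the localized problem must be phrased with respect to the feasible set $R(y)\cap\ball{x_0}{\epsilon}$ rather than $R(y)$, so that the value function of the localization is the correct object to reference.
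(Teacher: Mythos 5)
Your proposal is correct and follows essentially the same route as the paper: both reduce problem calmness of the $\epsilon$-localization at $x_\epsilon$ to the bound $\varphi(x)\ge\val{x_0,\epsilon}(y)$ for $x\in R(y)\cap\ball{x_0}{\epsilon}$ together with $\val{x_0,\epsilon}(\nullv)=\varphi(x_\epsilon)$, and then invoke the lower estimate on $\val{x_0,\epsilon}(y)-\val{x_0,\epsilon}(\nullv)$ coming from Theorem \ref{them:subdifval}. Your explicit use of the subgradient inequality $\val{x_0,\epsilon}(y)-\val{x_0,\epsilon}(\nullv)\ge-\|\lambda_\epsilon\|\,\|y\|$ is a slightly more quantitative phrasing of the same step the paper performs via calmness from below.
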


\begin{proof}
According to Theorem \ref{pro:Lagroptcond}, an $\epsilon_0>0$
exsists such that, for every $\epsilon\in (0,\epsilon_0]$, each
$\epsilon$--localization of $(\mathcal{P}_\nullv)$ admits a solution
$x_\epsilon\in\bd\ball{x_0}{\epsilon}$. Thus, fixed $r>0$,
using the calmness from below of function $\val{x_0,\epsilon}$ at $\nullv$, as it holds
by definition
$$
    \varphi(x)\ge\val{x_0,\epsilon}(y),\quad\forall x\in
   R(y)\cap\ball{x_0}{\epsilon},
$$
one obtains
$$
    \inf_{y\in r\B\backslash\{\nullv\}}
    \inf_{x\in R(y)\cap\ball{x_0}{\epsilon}\cap\ball{x_\epsilon}{r}}
    \frac{\varphi(x)-\varphi(x_\epsilon)}{\|y\|}\ge
    \inf_{y\in r\B\backslash\{\nullv\}}
    \frac{\val{x_0,\epsilon}(y)-\val{x_0,\epsilon}(\nullv)}{\|y\|}
    >-\infty.
$$
The proof is complete.
\end{proof}

%%%%%%%%%%%%%%%%%%%%%%%%%%%%%%%%%%%%%%%%%%%%%%%%%%%%%%%%%%%%%%
\vskip1cm

\bibliographystyle{amsplain}

\begin{thebibliography}{99}


\bibitem{Barv95} {\sc Barvinok, A.I.}, \textit{Problems of distance
geometry and convex properties of quadratic maps}, Discrete Comput.
Geom. \textbf{13} (1995), no. 2, 189--202.

\bibitem{BoEmKo04} {\sc Bobylev, N.A.}, {\sc Emelyanov, S.V.},
and {\sc Korovin, S.K.}, \textit{Convexity of images of convex
sets under smooth maps}, Comput. Math. Model. \textbf{15} (2004),
no. 3, 213--222.

\bibitem{BonSha00} {\sc Bonnans, J.F.} and {\sc Shapiro, A.},
\textit{Perturbation analysis of optimization problems}, Springer-Verlag,
New York, 2000.

\bibitem{Clar76} {\sc Clarke, F.H.}, \textit{A new approach to Lagrange
multipliers}, Math. Oper. Res. \textbf{1} (1976), no. 2, 165--174.

\bibitem{Dies85} {\sc Diestel, J.}, \textit{Geometry of Banach
spaces--selected topics}, Lecture Notes in Mathematics, Vol. 485,
Springer-Verlag, Berlin-New York, 1975.

\bibitem{DmMiOs80} {\sc Dmitruk, A.V., Milyutin, A.A.} and {\sc Osmolovski\u i, N.P.},
\textit{Lyusternik's theorem and the theory of extrema},
Russian Math. Surveys \textbf{35}(6) (1980), 11--51.

\bibitem{FaHaMoPeZi01} {\sc Fabian, M.}, {\sc Habala, P.}, {\sc H\'ajek, P.},
{\sc Montesinos Santaluc\'ia, V.}, {\sc Pelant, J.}, and {\sc Zizler, V.},
\textit{Functional analysis and infinite-dimensional geometry},
Springer-Verlag, New York, 2001.

\bibitem{HirPla89} {\sc Hiriart-Urruty, J.-B.} and {\sc Plazanet, Ph.},
\textit{Moreau's decomposition theorem revisited}, Ann. Inst. H.
Poincaré Anal. Non Lin\'eaire \textbf{6} (1989), 325--338.

\bibitem{KanAki82}  {\sc Kantorovich, L.V.} and {\sc Akilov, G.P.},
\textit{Functional analysis}, Pergamon Press, Oxford-Elmsford,
N.Y., 1982.

\bibitem{Milm71}  {\sc Milman, V.D.}, \textit{Geometric theory of
Banach spaces. II. Geometry of the unit ball}, Uspekhi Mat. Nauk
\textbf{26} (1971), no. 6, 73--149 [in Russian].

\bibitem{Mord06} {\sc Mordukhovich, B.S.}, \textit{Variational Analysis
and Generalized Differentiation I: Basic Theory}, Springer-Verlag,
Berlin Heidelberg, 2006.

\bibitem{Phel93} {\sc Phelps, R.R.}, \textit{Convex functions, monotone
operators and differentiability}, Second edition. Lecture Notes in
Mathematics, 1364. Springer-Verlag, Berlin, 1993.

\bibitem{Poly98} {\sc Polyak, B.T.}, \textit{Convexity of quadratic
transformations and its use in control and optimization}, J. Optim.
Theory Appl. \textbf{99} (1998), no. 3, 553--583.

\bibitem{Poly01} {\sc Polyak, B.T.}, \textit{Convexity of Nonlinear
Image of a Small Ball with Applications to Optimization}, Set-Valued
Anal. \textbf{9} (2001), no. 1-2, 159--168.

\bibitem{Poly03} {\sc Polyak, B.T.}, \textit{The convexity principle
and its applications}, Bull. Braz. Math. Soc. (N.S.) \textbf{34}
(2003), no. 1, 59--75.

\bibitem{Zali02} {\sc Z\u alinescu, C.}, \textit{Convex analysis
in general vector spaces}, World Scientific Publishing Co., Inc.,
River Edge, NJ, 2002.

%

\end{thebibliography}

\vskip1cm

\end{document}